\def\fpd#1#2{{\displaystyle\frac{\partial #1}{\partial #2}}}
\def\vf#1{\frac{\partial}{\partial #1}}
\def\conn#1#2#3{\setbox1=\hbox{$\scriptstyle{#2}{#3}$}%
\setbox2=\hbox to\wd1{$\hfil\scriptstyle{#1}\hfil$}
\Gamma^{\!\box2}_{\!\box1}}
\def\D{\mathfrak{D}}
\def\l{\mathfrak{l}}
\def\L{\mathfrak{L}}
\def\poly{\mathfrak{p}}
\def\R{\mathbb{R}}
\def\r{\mathbf{r}}
\def\S{\mathcal{S}}
\def\bxi{\boldsymbol{\xi}}
\def\boeta{\boldsymbol{\eta}}
\def\B{\mathbf{B}}
\def\e{\mathbf{e}}
\def\K{\mathbf{K}}
\def\y{\mathbf{y}}
\def\onehalf{{\textstyle\frac12}}
\def\threehalf{{\textstyle\frac32}}
\def\im{\mathop{\mathrm{im}}}
\def\id{\mathop{\mathrm{id}}}
\newcommand{\dT}{d_{\mathrm{T}}}
\newcommand{\eval}[2]{\left. #1 \right|_{#2}}
\newcommand{\g}{\gamma}
\newcommand{\G}{\Gamma}
\newcommand{\gt}{\tilde{\gamma}}
\newcommand{\Gt}{\tilde{\G}}
\newcommand{\iT}{i_{\mathrm{T}}}
\newcommand{\jt}{\tilde{\jmath}}
\newcommand{\Lie}{\mathcal{L}}
\newcommand{\tr}{\mbox{\textsc{t}}}
\newcommand{\ve}{\varepsilon}
\newcommand{\vt}{\vartheta}
\renewcommand{\dddot}[1]{%
\setbox0=\hbox{$#1$}%
\dimen0=-\wd0%
\dimen1=\dimen0%
\dot{#1}%
\advance\dimen0 by -0.2em%
\hspace{\dimen0}%
\dot{\phantom{#1}}%
\advance\dimen1 by 0.4em%
\hspace{\dimen1}%
\dot{\phantom{#1}}%
\hspace{-0.2em}%
}
\renewcommand{\ddddot}[1]{%
\setbox0=\hbox{$#1$}%
\dimen0=-\wd0%
\dimen1=\dimen0%
#1%
\advance\dimen0 by -0.2em%
\hspace{\dimen0}%
\ddot{\phantom{#1}}%
\advance\dimen1 by 0.4em%
\hspace{\dimen1}%
\ddot{\phantom{#1}}%
\hspace{-0.2em}%
}
\newtheorem{thm}{Theorem}
\newtheorem{lemma}{Lemma}
\newtheorem{cor}{Corollary}
\newtheorem{prop}{Proposition}
\newtheorem{defn}{Definition}
\newcommand{\art}[6]{#1: #2 {\it #3\/} {\bf #4} (#5) #6}
\newcommand{\book}[4]{#1: {\it #2\/} (#3, #4)}
\newcommand{\artinbook}[7]{#1: #2. In:\ {\it #3\/} ed #4 (#5, #6) #7}
\newcommand{\arxiv}[3]{#1: #2. Preprint:\ arXiv:#3}
\begin{document}

\title{Homogeneity and projective equivalence of differential 
equation fields}

\author{M.\ Crampin\\
Department of Mathematics,
Ghent University\\
Krijgslaan 281, B--9000 Gent, Belgium\\
and D.\,J.\ Saunders\\
Department of Mathematics, Faculty of Science,\\
The University of Ostrava\\
30.\ dubna 22, 701 03 Ostrava, Czech Republic}

\maketitle

\begin{abstract}\noindent We propose definitions of homogeneity and
projective equivalence for systems of ordinary differential equations
of order greater than two, which allow us to generalize the concept 
of a spray (for systems of order two). We show that the 
Euler-Lagrange fields of parametric Lagrangians of order greater than 
one which are regular (in a natural sense that we define) form a 
projective equivalence class of homogeneous systems. We show further 
that the geodesics, or base integral curves, of projectively 
equivalent homogeneous differential equation fields are the same apart from
orientation-preserving reparametrization; that is, homogeneous 
differential equation fields determine systems of paths.
\end{abstract}

\subsubsection*{MSC}
34A26, 70H03, 70H50.

\subsubsection*{Keywords}
Ordinary differential equations; homogeneity; projective equivalence; 
parametric Lagrangian; reparametrization.

\section{Introduction}
This paper is concerned with systems of ordinary differential 
equations 
\[
y^i_{n+1} = \G^i(y^i,y^i_1, \ldots, y^i_n);\quad 
y^i_r=\frac{d^ry^i}{dx^r} 
\]
of order $n+1$, $n\geq 1$, in $m$ dependent variables $y^i$,
$i=1,2,\ldots,m$ and one independent variable $x$.  Second-order
systems of this kind ($n=1$) have been studied intensively using
methods of differential geometry, by taking advantage of the fact that
the equations are those satisfied by the integral curves of a vector
field of a certain type, a so-called second-order differential
equation field (often abbreviated to SODE) on the tangent bundle $TM$
of an $m$-dimensional differential manifold $M$, or some open
submanifold of it.  Higher-order systems may be studied by analoguous
methods, where now the differential equation vector field in question
lives (in the case of equations of order $n+1$) on the bundle $T^nM$
of $n$-velocities, in other words $n$-jets at zero of curves in $M$
(whose domains contain zero), or some open submanifold of it.  This is
the approach adopted here.  (For a recent account of the geometric
theory of differential equations, covering both second- and
higher-order systems, and containing many references, see
\cite{buc}.  However, this reference is mainly concerned with aspects
of the theory not covered here, namely generalized connections and
related matters.)

In the theory of second-order differential equation fields an
important role is played by a class of fields which are homogeneous in
a certain sense.  Let $T_\circ M$ be the slit tangent bundle of $M$
($TM$ with the zero section deleted).  Let $\Delta$ be the Liouville
vector field, that is, the infinitesimal generator of dilations of the
fibres of $T_\circ M$.  A second-order differential equation field
$\Gamma$ on $T_\circ M$ is called a spray if it satisfies
$[\Delta,\Gamma]=\Gamma$.  The canonical spray of a Finsler space,
whose integral curves project onto geodesics with constant speed
parametrization, is a helpful example.  If $\Gamma$ is a spray then
the distribution $\D^+$ spanned by $\Gamma$ and $\Delta$ is
involutive.  Any second-order differential equation field $\Gamma'$
contained in $\D^+$ is said to be projectively equivalent to $\Gamma$,
and its base integral curves are obtained from those of $\Gamma$ by
reparametrization.  Slightly more generally, for the purposes of this
paper we shall say that a second-order differential equation field
$\Gamma$ is homogeneous if the distribution $\D^+$ spanned by $\Gamma$
and $\Delta$ is involutive.  It is not difficult to show that if
$\Gamma$ is homogeneous in this sense then there is a projectively
equivalent second-order differential equation field which is a spray. 
In Finsler geometry the canonical spray is the Euler-Lagrange field 
of the energy, which is a regular Lagrangian. But instead of the 
energy one may consider the Finsler function itself as a Lagrangian.
Because of its assumed homogeneity it is parametric:\ that is to say, 
the corresponding action integral is invariant under 
reparametrizations. Such a parametric Lagrangian determines, not a 
single Euler-Lagrange field, but a projective equivalence class of 
second-order differential equation fields, which in the Finslerian 
case is the projective class of the canonical spray.

Our purpose in the present paper is to generalize the concept of
homogeneity so that it applies to differential equation fields of
higher order.  For this purpose we introduce the jet group of order
$n$, $L^n$, which is defined as follows.  Consider the local
diffeomorphisms $\phi$ of $\R$, defined in a neighbourhood of zero and
satisfying $\phi(0)=0$.  Then $L^n$ is the set of $n$-jets at zero of
such local diffeomorphisms; it is a group under composition, and even
a Lie group.  Moreover, it acts to the right on $T^nM$.  Let $\D$ be
the distribution spanned by the generators of the action.  We say that
a differential equation field $\Gamma$ is homogeneous if the
distribution $\D^+$ spanned by $\Gamma$ and $\D$ is involutive.  This
generalizes the concept described above for second-order differential
equation fields because when $n=1$, $\D$ is just the one-dimensional
distribution spanned by $\Delta$.  (A preliminary attempt to examine
homogeneity of higher-order differential equation fields is to be
found in \cite{hom}; however, this paper is again concerend mainly
with connection theory.)

Section 3 below is devoted to the definition of homogeneity and its
immediate consequences.
 
With this definition we show that the other properties of homgeneous
second-order differential equation fields discussed above also
generalize.  The extremals of a Lagrangian of order $n$ satisfy
differential equations of order $2n$.  If the Lagrangian is
parametric, so that it satisfies the so-called Zermelo conditions,
then its extremals are determined only up to reparametrization.  We
show in Section 4 that, provided it is sufficiently regular, a
parametric Lagrangian determines a projective equivalence class of
homogeneous differential equation fields.  This result gives us the
reassurence that our definition is not vacuous.  

In Section 5 we show that the base integral curves of projectively
equivalent homogeneous differential equation fields differ only in
parametrization.

Section 6 of the paper contains some examples, while Section 2 
is devoted to the requisite geometry of jet manifolds and jet groups.

We shall use the Einstein summation convention for coordinate indices
such as $i$; other sums will be indicated explicitly.

\section{Geometrical background}

We consider differentiable manifolds of class $C^\infty$ which are
Hausdorff, second-countable and (unless otherwise specified)
connected.

Let $M$ be such a manifold with $\dim M = m$ and with local
coordinates $(y^i)$.  Consider curves $\g : (a,b) \to M$, where
$0\in(a,b)$, and let $T^n M$ be the set of $n$-jets at zero
$\{j^n_0\g\}$, with coordinates $(y^i_r)$ for $0 \le r \le n$ given by
\[
y^i_r(j^n_0\g) = \eval{\frac{d^r \g^i}{dx^r}}{0}  .
\]
It is a standard result that $T^n M$ is a manifold with $\dim T^n M =
(n+1)m$, and that the maps $\tau_n : T^n M \to M$ and
$\tau_{n,n^\prime} : T^n M \to T^{n^\prime}M$ ($n^\prime < n$) given
by
\[
\tau_n(j^n_0\g) = \g(0)  ,
\qquad \tau_{n,n^\prime}(j^n_0\g) = j^{n^\prime}_0\g
\]
are fibre bundles.

The manifold $T^{n+1}M$ has a natural identification with a
submanifold of $TT^n M$, obtained because the map $t \mapsto j^n_t\g$
defines a curve
\[
\jt^n\g : t \mapsto j^n_0(\g\circ\tr_t)
\]
in $T^n M$, where $\tr_t : \R \to \R$ is the translation $x \mapsto
x+t$.  We therefore obtain the inclusion
\[
T^{n+1}M \to TT^n M  , \qquad j^{n+1}_0\g \mapsto j_0 \jt^n\g
\]
given in coordinates by $y^i_{r+1} \mapsto \dot{y}^i_r$.  We may think
of this inclusion as a section of the pull-back vector bundle
$\tau_{n+1,n}^* TT^n M \to T^{n+1}M$, in other words as a vector field
along the projection $\tau_{n+1,n}$; with such an interpretation it is
the total derivative
\[
\dT = \sum_{r=0}^n y^i_{r+1} \vf{y^i_r}  .
\] 
The projection $\tau_{n+1,n} : T^{n+1}M \to T^n M$ is in fact an
affine bundle, modelled on the vector bundle $V\tau_{n,n-1} \to T^n M$
of very vertical tangent vectors on $T^n M$.  The affine action is
just addition in the fibres of $TT^n M \to T^n M$:
\[
(y^i_r; 0, \ldots, 0, z^i) + 
(y^i_r; \dot{y}^i_0, \ldots, \dot{y}^i_{n-1}, \dot{y}^i_n)
\;\mapsto\; (y^i_r; \dot{y}^i_0, \ldots, \dot{y}^i_{n-1}, z^i + \dot{y}^i_n)
\]
gives rise, using the identification $\dot{y}^i_r = y^i_{r+1}$ on
$T^{n+1}M$, to
\[
(y^i_r; z^i) + (y^i_r; y^i_{n+1}) \;\mapsto\; (y^i_r; z^i + y^i_{n+1})  .
\]

We shall make use later of the vertical endomorphism on $T^nM$.  This
is the type $(1,1)$ tensor field $S$ given in coordinates by
\[
S=\sum_{r=1}^n r\vf{y^i_r}\otimes dy^i_{r-1};
\]
it is canonically defined, and generalizes the well-known vertical 
endomorphism or tangent structure on $TM$. It is not difficult to 
show that for any 1-form $\alpha$ on $T^nM$, 
\[ 
S(\dT\alpha)-\dT(S\alpha)=\tau_{n+1,n}^*\alpha
\]
(where in the first instance $S$ is the vertical endomorphism on
$T^{n+1}M$, in the second the one on $T^nM$).  This may conveniently
be written $S\dT-\dT S=1$, with the pull-back map understood (note
that we assume action on 1-forms; there is a more complicated formula
for action on forms of higher degree).

A section $\Gt : T^n M \to T^{n+1}M$ of the affine bundle
$\tau_{n+1,n}$ is called a \emph{differential equation field}; in
coordinates it is indeed a differential equation
\[
y^i_{n+1} = \G^i(y^i_0, \ldots, y^i_n)  .
\]
A curve $\g : (a,b) \to M$ is a geodesic of the equation if
$\jt^{n+1}\g = \Gt \circ \jt^n\g$.  Composing $\Gt$ with the inclusion
$T^{n+1}M \to TT^n M$ gives a vector field $\G$ on $T^n M$ of the
particular form
\[
\G = \sum_{r=0}^{n-1} y^i_{r+1} \vf{y^i_r} + \G^i \vf{y^i_n}  ; 
\]
every integral curve $\gt : (a,b) \to T^n M$ of $\G$ is of the form
$\gt = \jt^n\g$ for some geodesic $\g$ of $\Gt$; that is to say, the
geodesics of $\Gt$ are the base integral curves of $\G$.  We use the
term `differential equation field' to refer to $\G$ as well as $\Gt$.

Now consider local diffeomorphisms $\phi$ of $\R$ defined in a
neighbourhood of zero and satisfying $\phi(0)=0$.  Let $L^n$ be the
set of $n$-jets at zero $\{j^n_0\phi\}$ of these local
diffeomorphisms; this is a group under composition,
\[
j^n_0\phi_1 \cdot j^n_0\phi_2 = j^n_0(\phi_1 \circ \phi_2)
\]
and is a Lie group. As a manifold it has two connected components.
The component of the identity is a subgroup $L^{n+}$ of index 2
containing jets of local diffeomorphisms satisfying
$\phi^\prime(0)>0$.  The map
\[
j^n\phi_0 \mapsto \bigl( \phi^\prime(0), \phi^{\prime\prime}(0), \ldots, 
\phi^{(n)}(0) \bigr) \in \R^n
\]
is a global coordinate system on $L^n$ (and $L^{n+}$).  We can obtain
an explicit formula for the product in these coordinates by using an
expression for the $n$th derivative of a composition of functions.
This is Fa\`{a} di Bruno's formula~\cite{FdB}, which we take in the
form involving the Bell polynomials:
\[
(\xi \circ \eta)^{(n)}(x) =
\sum_{r=1}^n \xi^{(r)}(\eta(x))B_n^r\left( \eta^\prime(x), 
\eta^{\prime\prime}(x), 
\ldots, \eta^{(n+1-r)}(x) \right)  ;
\]
here $\xi$ and $\eta$ are functions of some variable $t$ with (in our
case) $\xi(0)=\eta(0)=0$, and $B_n^r(\eta_1, \eta_2, \ldots,
\eta_{n+1-r})$ is a polynomial in the $n + 1 - r$ variables $\eta_p$,
for which the following explicit formula is known:
\begin{multline*}
B_n^r(\eta_1, \eta_2, \ldots, \eta_{n+1-r}) \\
= \sum \frac{n!}{q_1! q_2!\cdots q_{n+1-r}!} 
\left( \frac{\eta_1}{1!} \right)^{q_1} 
\left( \frac{\eta_2}{2!} \right)^{q_2}
\ldots \left( \frac{\eta_{n+1-r}}{(n + 1 - r)!} \right)^{q_{n+1-r}}
\end{multline*}
where the sum is over all non-negative integers $q_1, q_2, \ldots,
q_{n+1-r}$ such that $q_1+q_2+ \ldots + q_{n+1-r} = r$ and $q_1+2q_2+
\ldots +(n+1-r)q_{n+1-r} = n$.  (This formula may easily be derived by
taking $\xi(x)=x^r$ and expressing $\eta(x)$ as a formal power series in
$x$:
\[
\eta(x)=\frac{\eta_1}{1!}x+\frac{\eta_2}{2!}x^2+\frac{\eta_3}{3!}x^3+\cdots;
\]
then $(r!/n!)B_n^r(\eta_1, \eta_2, \ldots, \eta_{n+1-r})$ is the
coefficient of $x^n$ in the formal power series for $(\eta(x))^r$.  In
principle $B_n^r$ could depend on $\eta_p$ with $p>n+1-r$, but in 
practice, since $\eta(x)$ contains no constant term there can be no 
contribution to $x^n$ in $(\eta(x))^r$ coming from the term 
$\eta_px^p/p!$ if $p>n+1-r$.) To express multiplication in 
$L^n$ in terms of the global coordinate system introduced above, a 
vectorial notation is convenient. We denote by $\bxi$, $\boeta$ 
elements of $\R^n$, considered as row vectors, and by $\B(\boeta)$ 
the (upper triangular) matrix whose $p,q$ entry is 
$B_q^p(\eta_1,\eta_2,\ldots)$. Then the product in $L^n$ is given by
\[
\bxi\cdot\boeta=\bxi\B(\boeta);
\]
we must of course assume that $\xi_1$ and $\eta_1$ are non-zero. Note 
that the matrix which determines multiplication in $L^{n'}$ with $n'<n$ 
is just the $n'\times n'$ submatrix of $\B(\boeta)$ in the upper left 
corner. For instance, taking $n=4$ gives
\[
\B(\boeta)=
\left(
\begin{array}{cccc}
\eta_1&\eta_2&\eta_3&\eta_4\\
0&\eta_1^2&3\eta_1\eta_2&4\eta_1\eta_3+3\eta_2^2\\
0&0&\eta_1^3&6\eta_1^2\eta_2\\
0&0&0&\eta_1^4
\end{array}
\right).
\]

There are obvious projections $\lambda_{n,1} : L^n \to L^1$ and
$\lambda_{n,1}^+ : L^{n+} \to L^{1+}$ given by $j^n_0\phi \mapsto
j^1_0\phi$; they are homomorphisms because
\[
\lambda_{n,1}(j^n_0\phi_1 \cdot j^n_0\phi_2) = 
\lambda_{n,1}(j^n_0(\phi_1 \circ \phi_2)) 
= j^1_0(\phi_1 \circ \phi_2) = j^1_0 \phi_1 \cdot j^1_0 \phi_2  .
\]
The kernel of $\lambda_{n,1}$ is therefore a normal subgroup $K^n \lhd
L^n$.  By inspection of the matrix $\B$ above in the case $n=4$ we see
that $K^2$ and $K^3$ are abelian but $K^4$ is not; indeed $K^n$ is
non-abelian whenever $n \ge 4$.

We may, in addition, identify $L^1$ with a subgroup of $L^n$ by mapping
$j^1_0\phi$ to $j^n_0 \mu_{\phi^\prime(0)}$, where $\mu_s : \R \to \R$
is the multiplication diffeomorphism $\mu_s(t) = st$ for $s \ne 0$.
Thus $L^n$ may be regarded as a semidirect product $L^1 \rtimes K^n$,
using the action of $L^1$ on $K^n$ by conjugation.

Furthermore, we may write $L^n = L^2 \rtimes J^n$, where $J^n$ is the
kernel of the homomorphism $\lambda_{n,2} : L^n \to L^2$ given by
$j^n_0\phi \mapsto j^2_0\phi$.  For the inclusion of $L^2$ in $L^n$ we
take, as a representative of $j^2_0\phi$, not the quadratic polynomial
$\phi(x) = ax + \frac{1}{2}bx^2$ but the M\"{o}bius
transformation
\[
\phi(x) = \frac{ax}{1 - a^{-1}bx}
\]
because the composition of two of these transformations is another map
of the same form.  By calculating the derivatives of $\phi$ one finds
a polynomial representative of $j^n_0\phi$,
\[
\sum_{r=1}^n \frac{b^{r-1}x^r }{a^{r-2}}
\]
(see~\cite{KMS}).  Note that the kernels of the homomorphisms
$\lambda_{n,r}$ do not give semidirect product decompositions when
$r>2$.

We next obtain a basis for the Lie algebra $\l^n$ of the jet group 
$L^n$ in terms of the coordinates defined above. For this purpose we 
need the following lemma.

\begin{lemma}\label{bell}
For given $n$, $p=1,2,\ldots,n$, and $r=1,2,\ldots,n+1-p$, 
\[
\eval{\fpd{B_n^p}{\eta_r}}{(1,0,\ldots,0)}=0
\]
unless $p=n+1-r$, when 
\[
\eval{\fpd{B_n^p}{\eta_r}}{(1,0,\ldots,0)}=\frac{n!}{(n-r)!r!}.
\]
\end{lemma}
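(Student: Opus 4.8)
The plan is to bypass the explicit multinomial sum for $B_n^p$ and work instead with the generating-function description recorded just above. Write $\eta(x)=\sum_{k\ge1}(\eta_k/k!)x^k$ as a formal power series in $x$, and for a formal power series $f$ let $[x^k]f$ denote the coefficient of $x^k$. The remark above says $B_n^p(\eta_1,\ldots,\eta_{n+1-p})=(n!/p!)\,[x^n](\eta(x))^p$. For each fixed $n$ this coefficient is an honest polynomial in the finitely many variables $\eta_1,\ldots,\eta_n$, so $\partial/\partial\eta_r$ may be applied term by term and commutes with the operation $[x^n]$.

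First I would differentiate $(\eta(x))^p$ with respect to $\eta_r$. Since $\partial\eta(x)/\partial\eta_r=x^r/r!$, the chain rule gives $\partial(\eta(x))^p/\partial\eta_r=p\,(\eta(x))^{p-1}x^r/r!$, and comparing coefficients of $x^n$ yields
\[
\fpd{B_n^p}{\eta_r}=\frac{n!}{(p-1)!\,r!}\,[x^{\,n-r}](\eta(x))^{p-1}.
\]

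Next I would specialize to $(\eta_1,\eta_2,\ldots)=(1,0,\ldots,0)$, where $\eta(x)$ collapses to the single monomial $x$, so that $(\eta(x))^{p-1}=x^{p-1}$ and $[x^{\,n-r}]x^{p-1}$ equals $1$ when $n-r=p-1$ and $0$ otherwise. Hence $\partial B_n^p/\partial\eta_r$ vanishes at $(1,0,\ldots,0)$ unless $p=n+1-r$, and in that case $p-1=n-r$, so its value is $n!/((n-r)!\,r!)$, which is the assertion. No separate treatment of $r=1$ is required; and for $r>n+1-p$ the displayed expression is identically zero, consistent with the earlier observation that $B_n^p$ then does not depend on $\eta_r$.

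I do not anticipate a genuine obstacle here: the one step meriting care is the interchange of $\partial/\partial\eta_r$ with coefficient extraction, and this is immediate precisely because every coefficient of $(\eta(x))^p$ is a polynomial in finitely many of the $\eta_k$. If one prefers to argue directly from the displayed sum defining $B_n^p$, the same result falls out combinatorially: differentiating a monomial $\prod_j(\eta_j/j!)^{q_j}$ by $\eta_r$ and then setting $\eta_1=1$ and $\eta_j=0$ for $j\ge2$ kills every term except the one with $q_r=1$ and $q_j=0$ for all $2\le j\ne r$; the constraints $\sum_jq_j=p$ and $\sum_jjq_j=n$ then force $q_1=n-r$ and $p=n+1-r$, and reading off the contribution of that unique surviving monomial gives $n!/((n-r)!\,r!)$.
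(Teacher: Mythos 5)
Your argument is correct, and it takes a genuinely different (and tidier) route than the paper. The paper differentiates the explicit multinomial sum defining $B_n^p$ term by term and is forced into two cases, $r=1$ and $r\geq 2$, because the unique surviving monomial after evaluation at $(1,0,\ldots,0)$ is of a different shape in the two cases (for $r=1$ one needs $q_1=p=n$ and the derivative brings down a factor $q_1=n$, whereas for $r\geq 2$ one needs $q_r=1$, $q_1=n-r$). You instead exploit the generating-function identity $B_n^p=(n!/p!)\,[x^n](\eta(x))^p$, which the paper records only as a parenthetical derivation of the explicit formula; differentiating $(\eta(x))^p$ in $\eta_r$ before extracting the coefficient collapses everything to $[x^{n-r}]x^{p-1}$ at the evaluation point, and the case split disappears. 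Your justification of the one delicate step (commuting $\partial/\partial\eta_r$ with coefficient extraction) is exactly right. The only blemish is in your closing combinatorial aside: the condition ``$q_r=1$ and $q_j=0$ for $2\le j\ne r$'' is not the correct description of the surviving term when $r=1$ (there one needs $q_j=0$ for $j\ge2$ with $q_1=p$ unconstrained to equal $1$, and the derivative contributes the factor $q_1$); this is precisely why the paper separates that case. Since that aside is offered only as an alternative and your primary proof handles $r=1$ uniformly and correctly, nothing essential is affected.
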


\begin{proof}
We consider the cases $r=1$, $r\geq 2$ separately. We have
\begin{align*}
\fpd{B_n^p}{\eta_1}&=\sum\frac{n!}{(q_1-1)!q_2!\cdots q_{n+1-p}!1!}\\
&\qquad\times
\left(\frac{\eta_1}{1!}\right)^{q_1-1}\left(\frac{\eta_2}{2!}\right)^{q_2}
\ldots\left(\frac{\eta_{n+1-r}}{(n + 1 - r)!}\right)^{q_{n+1-r}}
\end{align*}
where the sum is over all non-negative integers $q_1, q_2, \ldots,
q_{n+1-r}$ with $q_1>0$ such that $q_1+q_2+ \ldots + q_{n+1-r} = p$
and $q_1+2q_2+ \ldots +(n+1-r)q_{n+1-r} = n$. With $\eta_1=1$, 
$\eta_r=0$ for $r\geq 2$ we get a non-zero contribution to the sum 
only if $q_r=0$ for $r\geq 2$, when we must have $q_1=p$ and $q_1=n$. 
That is, there is no non-zero term in the sum unless $p=n$, and so
\[
\eval{\fpd{B_n^p}{\eta_1}}{(1,0,\ldots,0)}=
\left\{
\begin{array}{ll}
0&p\neq n\\
\displaystyle{\frac{n!}{(n-1)!1!}}&p=n
\end{array}
\right..
\]
For $r\geq 2$ on the other hand
\begin{align*}
\fpd{B_n^p}{\eta_r}&=\sum\frac{n!}{q_1!\cdots(q_r-1)!\cdots q_{n+1-p}!r!}\\
&\qquad\times
\left(\frac{\eta_1}{1!}\right)^{q_1}\ldots
\left(\frac{\eta_r}{r!}\right)^{q_r-1}
\ldots\left(\frac{\eta_{n+1-r}}{(n + 1 - r)!}\right)^{q_{n+1-r}}
\end{align*}
where the sum is now over all non-negative integers $q_1, q_2, \ldots,
q_{n+1-r}$ with $q_r>0$ such that $q_1+q_2+ \ldots + q_{n+1-r} = p$
and $q_1+2q_2+ \ldots +(n+1-r)q_{n+1-r} = n$.  With $\eta_1=1$,
$\eta_s=0$ for $s\geq 2$ we get a non-zero contribution to the sum only
if $q_r=1$, $q_s=0$ for $s\geq 2$, $s\neq r$.  We must therefore 
have $q_1+1=p$ and $q_1+r=n$, that is, $p=n+1-r$ and $q_1=n-r$.  Then
\[
\eval{\fpd{B_n^p}{\xi_r}}{(1,0,\ldots,0)}=
\left\{
\begin{array}{ll}
0&p\neq n+1-r\\
\displaystyle{\frac{n!}{(n-r)!r!}}&p=n+1-r
\end{array}
\right..
\qedhere
\]
\end{proof}

We denote by $\delta^r$, $r=1,2,\ldots,n$, the left-invariant vector
field on $L^n$ which takes the value
\[
r!\vf{y_r}
\]
at the identity. Here $(y_1,y_2,\ldots,y_n)$ are the coordinates on 
$L^n$. The factor $r!$ is included for later convenience. We now 
obtain an explicit expression for $\delta^r$ in terms of coordinates.

\begin{prop}
\[
\delta^r=\sum_{s=r}^n\frac{s!}{(s-r)!}y_{s+1-r}\vf{y_s}.
\]
\end{prop}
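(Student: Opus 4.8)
The plan is to compute $\delta^r$ as a left-invariant vector field directly in the global coordinates $(y_1,\ldots,y_n)$, feeding the explicit multiplication rule $\bxi\cdot\boeta=\bxi\B(\boeta)$ into Lemma~\ref{bell}. The only input beyond that lemma is the standard fact that the left-invariant vector field $X$ with prescribed value $X_e=v$ at the identity is given by $X_g=(dL_g)_e(v)$, where $L_g$ is left translation.

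First I would record that the identity element of $L^n$ is $e=(1,0,\ldots,0)$ (the jet of the identity map), so that the evaluation point in Lemma~\ref{bell} is exactly the identity. Then, fixing an arbitrary $\bxi\in L^n$, I would write $L_{\bxi}$ in coordinates: since the $p,s$ entry of $\B(\boeta)$ is $B_s^p(\eta_1,\eta_2,\ldots)$, which vanishes for $p>s$,
\[
(\bxi\cdot\boeta)_s=(\bxi\B(\boeta))_s=\sum_{p=1}^s \xi_p\,B_s^p(\eta_1,\eta_2,\ldots).
\]
Because $\delta^r$ has value $r!\,\vf{y_r}$ at $e$, applying $(dL_{\bxi})_e$ to $r!\,\vf{y_r}\big|_e$ gives, in coordinates,
\[
\delta^r_{\bxi}=\sum_{s=1}^n r!\left(\sum_{p=1}^s \xi_p\,\eval{\fpd{B_s^p}{\eta_r}}{(1,0,\ldots,0)}\right)\vf{y_s}.
\]

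Now I would invoke Lemma~\ref{bell}, with the lemma's $n$ taken to be $s$: the derivative $\partial B_s^p/\partial\eta_r$ at $(1,0,\ldots,0)$ vanishes unless $p=s+1-r$, in which case it equals $s!/((s-r)!\,r!)$. Hence for each $s$ only the single term $p=s+1-r$ survives, and there is no contribution at all when $s<r$ (then $p=s+1-r<1$ is not an admissible index), so
\[
\delta^r_{\bxi}=\sum_{s=r}^n r!\;\xi_{s+1-r}\;\frac{s!}{(s-r)!\,r!}\,\vf{y_s}=\sum_{s=r}^n \frac{s!}{(s-r)!}\,\xi_{s+1-r}\,\vf{y_s}.
\]
Since $\bxi$ was arbitrary and the coordinates are global, replacing $\xi_s$ by the coordinate function $y_s$ yields the asserted formula.

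I do not expect a serious obstacle here: the substance of the result is carried entirely by the preceding lemma, and this proposition is its expected payoff. The only points that need care are bookkeeping ones — keeping the matrix convention (the $p,q$ entry of $\B$ being $B^p_q$) straight when reading off the components of $L_{\bxi}$, and verifying that the index $p=s+1-r$ singled out by the lemma lies in the admissible range $1\le p\le s$ precisely when $r\le s\le n$, which is what fixes the summation range in the final formula.
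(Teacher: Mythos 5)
Your proposal is correct and follows essentially the same route as the paper's own proof: left-translate the value $r!\,\partial/\partial y_r$ at the identity using the multiplication rule $\bxi\cdot\boeta=\bxi\B(\boeta)$ and then apply Lemma~\ref{bell} to pick out the single surviving term $p=s+1-r$. The only (cosmetic) difference is that you carry the factor $r!$ explicitly throughout, whereas the paper computes $L_{\y}|_{*\e}\,\partial/\partial y_r$ first and leaves the final multiplication by $r!$ implicit.
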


\begin{proof}
Let $\y=(y_1,y_2,\ldots,y_n)$ be a generic point of $L^n$. The 
identity $\e$ has coordinates $(1,0,\ldots,0)$. We have
\[
\delta^r_{\y}=L_{\y}|_{*\e}\delta^r_{\e}
\]
where $L_\y$ is left multiplication by $\y$:\ $L_\y\boeta=\y\B(\boeta)$. 
Then
\begin{align*}
L_{\y}|_{*\e}\vf{y^r}&=
\sum_{s=1}^n\sum_{p=1}^sy_p\eval{\fpd{B_s^p}{\eta_r}}{(1,0,\ldots,0)}\vf{y_s}\\
&=\sum_{s=r}^n\frac{s!}{(s-r)!r!}y_{s+1-r}\vf{y_s}.
\qedhere
\end{align*}
\end{proof}

\begin{cor}
\[
[\delta^r,\delta^s]=
\left\{
\begin{array}{ll}
(r-s)\delta^{r+s-1}&\mbox{if $r+s\leq n+1$}\\
0&\mbox{otherwise}
\end{array}
\right..
\]\qed
\end{cor}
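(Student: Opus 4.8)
The plan is to reduce the whole statement to a single evaluation at the identity. Since $\delta^r$ and $\delta^s$ are left-invariant vector fields, so is $[\delta^r,\delta^s]$, and a left-invariant vector field on $L^n$ is completely determined by its value at the identity $\e$, whose coordinates are $(1,0,\ldots,0)$. Hence it is enough to show that $[\delta^r,\delta^s]_\e$ equals $(r-s)\delta^{r+s-1}_\e=(r-s)(r+s-1)!\,\vf{y_{r+s-1}}$ when $r+s\leq n+1$, and vanishes otherwise.

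To carry this out I would expand $[\delta^r,\delta^s]f=\delta^r(\delta^s f)-\delta^s(\delta^r f)$ on an arbitrary test function $f$, using the coordinate formula $\delta^r=\sum_{a=r}^n\frac{a!}{(a-r)!}y_{a+1-r}\vf{y_a}$ from the Proposition. Applying $\delta^r$ to $\delta^s f=\sum_{b=s}^n\frac{b!}{(b-s)!}y_{b+1-s}(\partial f/\partial y_b)$ gives a double sum; on evaluating at $\e$ every term in which the explicit factor $y_{a+1-r}$ or $y_{b+1-s}$ carries an index $\geq 2$ is killed, because those coordinates vanish at $\e$. Exactly two contributions survive: a symmetric second-order term $r!\,s!\,(\partial^2 f/\partial y_r\partial y_s)(\e)$, arising from the term with $a=r,\ b=s$; and a first-order term, coming from $\delta^r$ acting on the coefficient $y_{b+1-s}$, which survives at $\e$ only when $a=r$ and $b+1-s=r$, i.e.\ $b=r+s-1$, and then equals $r!\,\frac{(r+s-1)!}{(r-1)!}(\partial f/\partial y_{r+s-1})(\e)$ — provided $r+s-1\leq n$, since otherwise there is simply no such summand in $\delta^s f$. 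Subtracting the expression obtained by interchanging $r$ and $s$, the second-order terms cancel and, using $r!/(r-1)!=r$, the first-order terms combine to $(r-s)(r+s-1)!\,(\partial f/\partial y_{r+s-1})(\e)$ when $r+s\leq n+1$, and to $0$ otherwise. Since $f$ is arbitrary, this identifies $[\delta^r,\delta^s]_\e$ with $(r-s)\delta^{r+s-1}_\e$ (respectively with $0$), and left-invariance propagates the identity over all of $L^n$.

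The computation is entirely \emph{routine}, so I expect no real obstacle; the two things that need care are the bookkeeping of the factorial weights, which is what produces the coefficient $r-s$, and — more conceptually — the observation that the truncation of the sums at $a,b\leq n$ is precisely what forces the bracket to vanish once $r+s>n+1$: the term that would otherwise survive has nowhere to live, since $y_{r+s-1}$ is not a coordinate on $L^n$. (One could instead argue structurally, identifying $\l^n$ with the algebra of formal vector fields on $\R$ vanishing at the origin modulo those vanishing to order $n+1$, under which $\delta^r$ corresponds to $x^r\vf{x}$; the relation then follows from $[x^r\vf{x},x^s\vf{x}]=(s-r)x^{r+s-1}\vf{x}$ together with the sign reversal between the Lie bracket of vector fields and the Lie-algebra bracket of a group of diffeomorphisms. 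The direct verification above is, however, more self-contained.)
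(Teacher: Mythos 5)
Your proof is correct and takes essentially the same route as the paper, which leaves the corollary as an immediate consequence of the coordinate formula $\delta^r=\sum_{s=r}^n\frac{s!}{(s-r)!}y_{s+1-r}\vf{y_s}$ established in the preceding Proposition. Your reduction to a single evaluation at the identity via left-invariance is just a tidy way of organizing that same computation, and your observation that the truncation of the sum at $y_n$ is what forces the bracket to vanish when $r+s>n+1$ is exactly the right point to flag.
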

\noindent One reason for introducing the factor $r!$ in the definition of 
$\delta^r$ is to simplify the expression for the bracket.

We can now make some observations about the Lie algebra $\l^n$ which
correspond to properties of the Lie group $L^n$ mentioned earlier.  In
the first place, $\{\delta^1,\delta^2\}$ span a subalgebra.
Furthermore, for any $p=1,2,\ldots,n$, $\langle\delta^r:r\geq
p\rangle$ is an ideal in $\l^n$ (since for any $r\geq p$ and $s\geq
1$, $r+s-1\geq p$).  Let $\l_p^n\subset \l^n$ be the ideal just
defined (so that in particular $\l^n=\l_1^n$).  Then for $p>1$,
$\l^n/\l_p^n\sim\l^{p-1}$.

The ideals $\l^n_p$ for $p>1$ are nilpotent.  It is known that for a
nilpotent Lie algebra, exponentiation is a surjective map onto the
simply connected Lie group of which it is the algebra; it follows in
particular that exponentiation maps $\l^n_2$ onto $K^n$ (see
\cite{KMS}).  But it is possible to prove this directly, as we now show.
The Lie algebra $\l^n$ consists of linear vector fields on $\R^n$, and
so exponentiation in the group $L^n$ coincides with matrix
exponentiation.  In the case of an element
$\kappa=\sum_{r=2}^nk_r\delta^r$ of $\l^n_2$ the matrix $\K_n$ to be
exponentiated is strictly lower triangular:\ its elements are given by
\[
(\K_n)_{rs}=
\left\{
\begin{array}{ll}
0&r<s\\
\displaystyle{\frac{r!}{(s-1)!}}k_{r+1-s}&r\geq s
\end{array}
\right..
\]
Notice that the $(n-1)\times(n-1)$ submatrix in the upper left corner
is just the matrix $\K_{n-1}$ corresponding to the element
$\sum_{r=2}^{n-1}k_r\delta^r$ of $\l^{n-1}_2$.  As an example, with
$n=5$ we have
\renewcommand{\arraystretch}{1.2}
\[
\K_5=\left(
\begin{array}{ccccc}
0&0&0&0&0\\
\frac{2!}{0!}k_2&0&0&0&0\\
\frac{3!}{0!}k_3&\frac{3!}{1!}k_2&0&0&0\\
\frac{4!}{0!}k_4&\frac{4!}{1!}k_3&\frac{4!}{2!}k_2&0&0\\
\frac{5!}{0!}k_5&\frac{5!}{1!}k_4&\frac{5!}{2!}k_3&\frac{5!}{3!}k_2&0
\end{array}
\right).
\]
\renewcommand{\arraystretch}{1}

\begin{prop}
For any $\y=(1,y_2,y_3,\ldots,y_n)^T\in K^n$ there is $\kappa\in\l^n_2$ 
such that $\y=\exp\kappa$.
\end{prop}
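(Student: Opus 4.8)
The plan is to induct on $n$, exploiting the nested block structure of the matrices $\K_n$ recorded above, and to determine the components of $\kappa$ recursively. The first step is to spell out what $\y=\exp\kappa$ means in coordinates. Since $\l^n$ consists of linear vector fields on $\R^n$, exponentiation in $L^n$ is matrix exponentiation, and for $\kappa=\sum_{r=2}^nk_r\delta^r$ the coordinates of $\exp\kappa$ are precisely the entries of the first column of $\exp\K_n$. (That $\exp\kappa$ lies in $K^n$ is automatic, $\l^n_2$ being the Lie algebra of $K^n$; equivalently the top entry of that column is $(\exp\K_n)_{11}=1$ because $\K_n$ is strictly lower triangular.) So it is enough to show that $(k_2,\ldots,k_n)\mapsto(\text{first column of }\exp\K_n)$ is onto $\{1\}\times\R^{n-1}$.

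The base case $n=2$ is immediate: $\exp\K_2=\left(\begin{smallmatrix}1&0\\2k_2&1\end{smallmatrix}\right)$, so $k_2=\onehalf y_2$ works. For the inductive step, given $\y=(1,y_2,\ldots,y_n)^T\in K^n$ I would first apply the inductive hypothesis to the truncation $(1,y_2,\ldots,y_{n-1})^T\in K^{n-1}$, obtaining $k_2,\ldots,k_{n-1}$ that realise it, and then arrange the remaining parameter $k_n$ so as to correct the last coordinate. The structural fact that makes this work is that, since the last column of $\K_n$ vanishes and its upper-left $(n-1)\times(n-1)$ block is exactly $\K_{n-1}$, the matrix $\K_n$ is block lower triangular, $\K_n=\left(\begin{smallmatrix}\K_{n-1}&0\\ c^T&0\end{smallmatrix}\right)$, so that $\exp\K_n=\left(\begin{smallmatrix}\exp\K_{n-1}&0\\ c^T\psi(\K_{n-1})&1\end{smallmatrix}\right)$, where $\psi(x)=(e^x-1)/x$ is applied to the nilpotent $\K_{n-1}$. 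Hence the first $n-1$ entries of the first column of $\exp\K_n$ are unchanged from the previous stage and equal $(1,y_2,\ldots,y_{n-1})^T$ no matter what $k_n$ is, while the last entry is $(c^T\psi(\K_{n-1}))_1$.

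It then remains to check that $k_n$ enters $(c^T\psi(\K_{n-1}))_1$ only through the single entry $(\K_n)_{n,1}=n!\,k_n$ of $c^T$, and affinely at that, with coefficient $(\psi(\K_{n-1}))_{11}=1$; every other entry of $c^T$, and every entry of $\psi(\K_{n-1})$, involves only $k_2,\ldots,k_{n-1}$. Thus the last entry has the form $n!\,k_n+Q(k_2,\ldots,k_{n-1})$ for some polynomial $Q$, and the choice $k_n=\bigl(y_n-Q(k_2,\ldots,k_{n-1})\bigr)/n!$ completes the induction.

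I expect the one thing to watch to be exactly this last bookkeeping point --- that no $k_r$ with $r\ge n$ can leak into the lower-order terms --- but it reduces to two easy observations: an entry $(\K_n)_{ab}$ depends only on $k_{a-b+1}$, and the matrix products contributing to $\exp\K_n$ run along strictly decreasing chains of indices. Granting this, the same reasoning yields the recursion $k_r=\bigl(y_r-(\text{polynomial in }k_2,\ldots,k_{r-1})\bigr)/r!$, which determines $\kappa$ uniquely, so that in fact $\exp\colon\l^n_2\to K^n$ is a bijection --- though only surjectivity is claimed here.
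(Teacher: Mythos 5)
Your proposal is correct and follows essentially the same route as the paper's proof: induction on $n$ using the nested structure of $\K_n$, the observation that the first $n-1$ entries of the first column of $\exp\K_n$ are those of $\exp\K_{n-1}$, and the fact that the bottom-left entry is $n!\,k_n$ plus a polynomial in $k_2,\ldots,k_{n-1}$, so that $k_n$ can be solved for. The only difference is that you justify these structural facts explicitly via the block decomposition $\K_n=\left(\begin{smallmatrix}\K_{n-1}&0\\ c^T&0\end{smallmatrix}\right)$ and $\psi(x)=(e^x-1)/x$, where the paper simply asserts them as easy to see.
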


\begin{proof}
As a column vector in $\R^n$, $\exp\kappa\in L^n$ is just
$(\exp\K_n)\e$, where $\e=(1,0,\ldots,0)^T$; that is to say,
$\exp\kappa$ is the left column of $\exp\K_n$.  It is easy to see that
the first $n-1$ entries in the left column of $\exp\K_n$ comprise the
left column of the $(n-1)\times(n-1)$ matrix $\exp\K_{n-1}$, while the
lower left corner of $\exp\K_n$ is of the form
$n!k_n+p(k_2,k_3,\ldots,k_{n-1})$ where $p$ is a polynomial in the
indicated variables.  Since $\K_n$ is strictly lower triangular,
$\exp\K_n$ has 1s down the diagonal.  We have to show that for any
$y_2,y_3,\ldots,y_n$ we can choose $k_2,k_3,\ldots,k_n$ such that the
left column of $\exp\K_n$ is $(1,y_2,y_3,\ldots,y_n)^T$.  We proceed
by induction.  Thus we assume that we can find
$k_2,k_3,\ldots,k_{n-1}$ such that the left column of $\exp\K_{n-1}$
is $(1,y_2,y_3,\ldots,y_{n-1})^T$.  With these values of
$k_2,k_3,\ldots,k_{n-1}$ we set
$k_n=(y_n-p(k_2,k_3,\ldots,k_{n-1}))/n!$: then the left column of
$\exp\K_n$ is $(1,y_2,y_3,\ldots,y_n)^T$ as required.  We have
\[
\exp\K_2=\left(
\begin{array}{ll}
1&0\\
2!k_2&1
\end{array}
\right),
\]
so the first step certainly works.
\end{proof} 

We next turn to the relationship between $\l^n$ and certain vector
fields on $\R$.  An element $\delta$ of $\l^n$ determines a
one-parameter subgroup of $L^n$, namely $t\mapsto\exp(t\delta)$:\
observe first that the integral curve of $\delta$ through $\y\in L^n$
is $t\mapsto \y\cdot\exp(t\delta)$.  Now let $\phi_t$ be a
one-parameter group of diffeomorphisms of $\R$ such that
$\phi_t(0)=0$.  Then $\tilde{\phi}_t=j^n_0\phi_t$ is evidently a
one-parameter subgroup of $L^n$ whose infinitesimal generator, a
vector field $\chi$ on $L^n$, is left invariant and therefore an
element of $\l^n$.  For any $\y\in L^n$, $\chi_{\y}$ is the
tangent vector at $t=0$ to the curve $t\mapsto \y\cdot\tilde{\phi}_t$.
But the $s$th component of $\y\cdot\tilde{\phi}_t$ is given by
\[
(\y\cdot\tilde{\phi}_t)_s
=\sum_{p=1}^s y_p B_s^p\left(\eval{\fpd{\phi_t}{x}}{0},
\eval{\frac{\partial^2\phi_t}{\partial x^2}}{0},\ldots,
\eval{\frac{\partial^{s+1-p}\phi_t}{\partial x^{s+p-1}}}{0}\right)y_p.
\]
Note that since $\phi_0(x)=x$, the argument of $B_s^p$ at $t=0$ is
$(1,0,\ldots,0)$, while
\[
\frac{\partial}{\partial t}
\left(\frac{\partial^r\phi_t}{\partial x^r}\right)_{(0,0)}
=\frac{\partial^r}{\partial x^r}
\left(\frac{\partial\phi_t}{\partial t}\right)_{(0,0)}
=\eval{\frac{d^rX}{dx^r}}{0}
\]
where $X(x)\partial/\partial x$ is the infinitesimal generator of 
$\phi_t$ (a vector field on $\R$). On differentiating the formula for 
$(\y\cdot\tilde{\phi}_t)_s$ with respect to $t$ at $t=0$ we obtain
\[
\chi=\sum_{s=1}^n\left(
\sum_{p=1}^s\frac{s!}{(s-p)!p!}\eval{\frac{d^pX}{dx^p}}{0}y_{s+1-p}\right)\vf{y_s}
=\sum_{p=1}^n\frac{1}{p!}\eval{\frac{d^pX}{dx^p}}{0}\delta^p.
\]
In fact 
\[
\Phi : X\mapsto
\sum_{p=1}^n\frac{1}{p!}\eval{\frac{d^pX}{dx^p}}{0}\delta^p
\]
is a linear map from the space of vector fields on $\R$ vanishing at
the origin onto $\l^n$, such that $\Phi(X\partial/\partial x)$ depends
only on $j^n_0X$.  In particular, $\Phi(x^r\partial/\partial
x)=\delta^r$ for $r=1,2,\ldots,n$ while $\Phi(x^r\partial/\partial
x)=0$ for $r>n$.  Now
\[
\left[x^r\vf{x},x^s\vf{x}\right]=(s-r)x^{r+s-1}\vf{x},
\]
so $\Phi$ is an anti-homomorphism.

Let $\poly$ be the Lie algebra of vector fields on $\R$ whose
coefficients are formal power series in $x$, and let $\poly^n$ be the
subalgebra of those vector fields which vanish to order $n$ at 0, that
is, whose coefficient begins with $x^{n+1}$.  Then $\poly^0$ is the
subalgebra of formal power series vector fields which vanish at 0; and
for $n>0$, $\poly^n$ is an ideal in $\poly^0$.  If we think of $\Phi$
as a map $\poly^0\to\l^n$ it is a surjective anti-homomorphism with
kernel $\poly^n$, and therefore defines an anti-isomorphism
$\poly^0/\poly^n\to\l^n$.  To put things another way, we can realise
$\l^n$ as the space of vector fields on $\R$ whose coefficients are
polynomials of order $n$ which vanish at the origin, with bracket the
negative of the ordinary bracket of vector fields followed by
truncation at order $n$.  This alternative realisation is found
elsewhere in the literature.  The fact that the bracket is related to
the negative of the ordinary bracket is not too surprising when one
recalls that if the diffeomorphism group of $\R$ is regarded as an
infinite dimensional Lie group, its Lie algebra is the space of vector
fields on $\R$ with compact support, but with bracket the negative of
the ordinary vector field bracket.

The group $L^n$ has a right action $\alpha_n$ on $T^n M$ given by
composition of jets,
\[
\alpha_n : L^n \times T^n M \to T^n M  , 
\qquad (j^n_0\phi, j^n_0\g) \mapsto j^n_0(\g\circ\phi)  ;
\]
the action is fibred over the identity on $M$.  It restricts to the
`regular' submanifold $T^n_\circ M$ (that is, of $n$-jets $j^n_0\g$
where $\g$ is a curve with $\g^\prime(0) \ne 0$, so that $\g$ is an
immersion near zero) because composing a diffeomorphism with an
immersion gives another immersion.  We may use Fa\'{a}~di~Bruno's
formula recursively to see that the restricted action is free, using
the coordinates $(y^i_r)$, $1 \le i \le n$, defined on complete fibres
over $M$:\ by regularity at least one coordinate $y^i_1$ must be
non-zero at any given point, so if $\alpha_n(j^n_0\phi, j^n_0\g) =
j^n_0\g$ we see successively that $\phi^\prime(0) = 1$ and then that
$\phi^{\prime\prime}(0) = 0$, $\phi^{\prime\prime\prime}(0) = 0$,
\ldots, so that $j^n_0\phi = 1_{L^n}$.  The orbit space of $T^n_\circ
M$ under the action $\alpha_n$ has a manifold structure (and is,
indeed, a Hausdorff manifold); we shall denote it by $PT^n M$.  The
subgroup $L^{n+}$ acts in the same way, and its orbit space will be
denoted by $P^+ T^n M$; this is a double cover of $PT^n M$.  Let
$\rho_n : T^n_\circ M \to PT^n M$ and $\rho^+_n : T^n_\circ M \to P^+
T^n M$ be the projections.

In the present work we shall in effect be interested in the
circumstances when a differential equation field on $T^n M$ (more
accurately, on $T^n_\circ M$) `passes to the quotient' to determine a
line-element field on $PT^n M$ or an oriented line-element field on
$P^+ T^n M$.  In view of the identification of $T^{n+1}M$ with an
affine sub-bundle of $TT^n M$ over $T^n M$, it is of some interest to
consider the identification of $PT^{n+1}$ with a submanifold of
$PT(PT^n M)$.  One may do this, for instance, by using the action of
the tangent group $TL^n$ on $TT^n_\circ M$ to give $T(PT^n M)$, and
then the action of $L^1$ on the open submanifold $T_\circ(PT^n M)$ to
give $PT(PT^n M)$.  In fact it is known that $PT^{n+1}$ is an affine
bundle over $PT^n M$, even though $PT(PT^n M) \to PT^n M$ certainly
does not have an affine structure, as its fibres are compact (they are
projective spaces):\ see, for instance, the discussion in~\cite{KL},
which uses the fact that the kernel of the homomorphism $L^{n+1} \to
L^n$ is abelian.  Similar arguments hold in the oriented case.

We shall need to know the fundamental vector fields on $T^nM$ 
of the action of $L^n$. We denote by $\Delta^r$ the fundamental 
vector field corresponding to $\delta^r\in\l^n$. Then $\Delta^r$ is 
the infinitesimal generator of the one-parameter group 
$R_{\exp(t\delta^r)}$ (where $R$ denotes the right action). In terms 
of the coordinates $(y^i_r)$ on $T^nM$, the action of $\boeta\in L^n$ 
is given by
\[
(R_{\boeta}(y^j_s))^i_r=\sum_{p=1}^ry^i_pB_r^p(\eta_1,\eta_2,\ldots,\eta_{r+1-p}).
\]
By a by now familiar type of argument invoking Lemma~\ref{bell} we 
obtain
\begin{prop}
\[
\Delta^r=\sum_{s=r}^n\frac{s!}{(s-r)!}y^i_{s+1-r}\vf{y^i_s}.
\]
\end{prop}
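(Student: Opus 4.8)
The plan is to follow the template already used for the formula for $\delta^r$, with left translation replaced by the linearization at the identity of the right action $\alpha_n$. By construction $\Delta^r$ is the infinitesimal generator of the one-parameter group $R_{\exp(t\delta^r)}$, so it is obtained by differentiating at $t=0$ the coordinate expression for the action; writing the coordinate index as $s$ to avoid clashing with $r$, the $y^i_s$-coordinate of $R_{\boeta}$ applied to a generic point is $\sum_{p=1}^s y^i_p B_s^p(\eta_1,\ldots,\eta_{s+1-p})$, where $\boeta=\exp(t\delta^r)$. The first step is to read off the curve $t\mapsto\exp(t\delta^r)$ in coordinates near $t=0$: it passes through the identity $\e=(1,0,\ldots,0)$ with velocity $\delta^r_{\e}=r!\,\vf{y_r}$, so $\eta_q(0)=1$ for $q=1$ and $0$ otherwise, while $\dot\eta_q(0)=r!$ for $q=r$ and $0$ otherwise. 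This is exactly where the normalizing factor $r!$ built into the definition of $\delta^r$ comes back into play.

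Substituting and using the chain rule, only the derivative in the direction of $\eta_r$ survives, giving
\[
\eval{\frac{d}{dt}B_s^p(\eta_1(t),\ldots)}{t=0}=r!\,\eval{\fpd{B_s^p}{\eta_r}}{(1,0,\ldots,0)} .
\]
By Lemma~\ref{bell} (with its $n$ read as our $s$) this vanishes unless $p=s+1-r$, in which case it equals $r!\cdot s!/((s-r)!\,r!)=s!/(s-r)!$. Hence, for each fixed $s$, the only surviving term in the sum over $p$ is the one with $y^i_p=y^i_{s+1-r}$, and this term is present precisely when $s+1-r\ge1$, i.e.\ $s\ge r$. Collecting the contributions for $s=r,r+1,\ldots,n$ then yields $\Delta^r=\sum_{s=r}^n\frac{s!}{(s-r)!}y^i_{s+1-r}\vf{y^i_s}$, as claimed.

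I do not expect a genuine obstacle: the computation is formally identical to the one already carried out for $\delta^r$, and is the sort of argument — invoking Lemma~\ref{bell} — used elsewhere in this section. The only points demanding attention are reading the initial velocity of the exponential curve off the coordinates correctly, and keeping track of the index ranges so that the summation over $s$ genuinely starts at $r$ rather than at $1$. As a sanity check, specialising to $n=1$ collapses the formula to $\Delta^1=y^i_1\,\vf{y^i_1}$, the Liouville vector field on $TM$, consistent with the discussion in the introduction.
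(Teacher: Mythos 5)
Your proof is correct and is essentially the paper's own argument: the paper disposes of this proposition with the remark that it follows ``by a by now familiar type of argument invoking Lemma~\ref{bell}'', and what you have written out --- differentiating the coordinate expression of the right action along $t\mapsto\exp(t\delta^r)$, using that this curve passes through $\e$ with velocity $r!\,\partial/\partial y_r$, and applying Lemma~\ref{bell} with its $n$ read as $s$ so that only $p=s+1-r$ survives --- is precisely that argument, parallel to the computation given for $\delta^r$. No gaps.
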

\noindent In particular $\Delta^1$, which corresponds to the
infinitesimal generator of dilations of $\R$, is given by
\[
\Delta^1=y^i_1\vf{y^i_1}+2y^i_2\vf{y^i_2}+\cdots+ny^i_n\vf{y^i_n}.
\]

\begin{cor}
\[
[\Delta^r,\Delta^s]=
\left\{
\begin{array}{ll}
(r-s)\Delta^{r+s-1}&\mbox{if $r+s\leq n+1$}\\
0&\mbox{otherwise}
\end{array}
\right.. 
\]
\end{cor}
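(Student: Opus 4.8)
The plan is to obtain this as an immediate consequence of the Corollary already established for the bracket $[\delta^r,\delta^s]$ in $\l^n$, by exploiting the fact that the $\Delta^r$ are the fundamental vector fields of a \emph{right} action. First I would note that $\alpha_n$ (and its restriction to $T^n_\circ M$) is a genuine right action — indeed $j^n_0(\gamma\circ\phi_1)\circ\phi_2=j^n_0\bigl(\gamma\circ(\phi_1\circ\phi_2)\bigr)$ — so that the operators $R_g$ satisfy $R_{g_1}\circ R_{g_2}=R_{g_2 g_1}$, and that $\Delta^r$ is by definition the infinitesimal generator of $R_{\exp(t\delta^r)}$. The standard computation (flow of $\Delta^\delta$ is $R_{\exp(t\delta)}$, so $[\Delta^\delta,\Delta^\eta]=\frac{d}{dt}\big|_0(R_{\exp(-t\delta)})_*\Delta^\eta=\frac{d}{dt}\big|_0\Delta^{\mathrm{Ad}_{\exp(t\delta)}\eta}=\Delta^{[\delta,\eta]}$) shows that for a right action the assignment $\delta\mapsto\Delta^\delta$ is a Lie-algebra \emph{homomorphism}; the sign that makes it an anti-homomorphism for a left action is absent here precisely because of the order reversal $R_{g_1}\circ R_{g_2}=R_{g_2 g_1}$.

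Granting this, one has $[\Delta^r,\Delta^s]=\Delta^{[\delta^r,\delta^s]}$, and substituting the value of $[\delta^r,\delta^s]$ from the earlier Corollary gives the two cases of the statement at once. It is worth remarking that no $\Delta^0$ ever appears: if $r+s>n+1$ then $[\delta^r,\delta^s]=0$ and hence $[\Delta^r,\Delta^s]=0$, while if $r+s\le n+1$ and $r\neq s$ then $r+s-1\ge 2$, and if $r=s$ the coefficient $r-s$ vanishes.

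If one prefers a self-contained argument in the style of the preceding propositions, the same conclusion is visible directly from the coordinate formula for $\Delta^r$. Writing $\Delta^r=\sum_{i=1}^m\Delta^r_{(i)}$ with $\Delta^r_{(i)}=\sum_{s=r}^n\frac{s!}{(s-r)!}y^i_{s+1-r}\vf{y^i_s}$ (no sum on $i$), the pieces $\Delta^r_{(i)}$ and $\Delta^s_{(j)}$ involve disjoint sets of fibre coordinates when $i\neq j$, so $[\Delta^r_{(i)},\Delta^s_{(j)}]=0$; and for $i=j$ the bracket $[\Delta^r_{(i)},\Delta^s_{(i)}]$ is computed by the very same manipulation that yields $[\delta^r,\delta^s]=(r-s)\delta^{r+s-1}$ (with truncation beyond order $n$), so that $[\Delta^r_{(i)},\Delta^s_{(i)}]=(r-s)\Delta^{r+s-1}_{(i)}$ when $r+s\le n+1$ and $0$ otherwise. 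Summing over $i$ then gives the result. I expect the only real point needing care to be the homomorphism bookkeeping (verifying that the right action preserves, rather than reverses, the bracket); the rest is a corollary precisely because all the computational substance already lives in the $\l^n$ calculation.
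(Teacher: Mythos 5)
Your proposal is correct. The paper in fact offers no explicit proof of this corollary: it is stated immediately after the coordinate formula for $\Delta^r$, in exact parallel with the earlier corollary for $[\delta^r,\delta^s]$, so the intended argument is precisely your second route — the formula $\Delta^r=\sum_{s=r}^n\frac{s!}{(s-r)!}y^i_{s+1-r}\,\partial/\partial y^i_s$ is, for each fixed $i$, formally identical to the formula for $\delta^r$ on $L^n$, the pieces belonging to different values of $i$ involve disjoint coordinates and so commute, and the one-index computation carries over verbatim. Your first route, via the general fact that $\delta\mapsto\Delta^\delta$ is a Lie algebra homomorphism for a right action (with the sign bookkeeping done correctly: $(R_{\exp(-t\delta)})_*\Delta^\eta=\Delta^{\mathrm{Ad}_{\exp(t\delta)}\eta}$, so no reversal), is a genuinely different and more structural argument; it explains \emph{why} the two brackets agree with the same sign rather than merely observing that the same computation applies, and it would survive a change of coordinates or a less explicit description of the action. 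The coordinate route buys self-containedness and consistency with how the paper derived the $\delta^r$ bracket; the abstract route buys conceptual clarity and makes the later remark that $\D$ carries a copy of $\l^n$ immediate. Your parenthetical check that $\Delta^0$ never arises is correct but not really needed, since $r,s\geq 1$ forces $r+s-1\geq 1$ in all cases.
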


\begin{cor}
In terms of the vertical endomorphism $S$
\begin{enumerate}
\item a vector field $\Gamma$ on $T^n_\circ M$ is a differential 
equation field if and only if $S(\Gamma)=\Delta^1$;
\item $\Delta^{r+1}=S(\Delta^r)=S^r(\Delta^1)=S^{r+1}(\Gamma)$.
\end{enumerate}
\end{cor}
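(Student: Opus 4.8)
The plan is to prove both statements by a direct computation in the natural fibre coordinates $(y^i_r)$, $0\le r\le n$, using the coordinate expressions already established for $S$, for $\Delta^1$, and for the general $\Delta^r$, together with the coordinate form
\[
\G=\sum_{r=0}^{n-1}y^i_{r+1}\vf{y^i_r}+\G^i\vf{y^i_n}
\]
of a differential equation field recorded earlier.

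For part (1) I would start from an arbitrary vector field $\G=\sum_{r=0}^nA^i_r\vf{y^i_r}$ on $T^n_\circ M$. Since $S=\sum_{r=1}^nr\vf{y^i_r}\otimes dy^i_{r-1}$ is a type $(1,1)$ tensor, contracting it with $\G$ simply reads off components: $S(\G)=\sum_{r=1}^nrA^i_{r-1}\vf{y^i_r}$. Comparing this coefficient by coefficient with $\Delta^1=\sum_{s=1}^nsy^i_s\vf{y^i_s}$, the equation $S(\G)=\Delta^1$ is equivalent to $A^i_{r-1}=y^i_r$ for $r=1,\dots,n$, i.e.\ $A^i_s=y^i_{s+1}$ for $0\le s\le n-1$, with $A^i_n$ left arbitrary; and that is precisely the coordinate description of a differential equation field, with $A^i_n=\G^i$. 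So (1) follows at once. For the first identity in (2) I would apply the same contraction rule to $\Delta^r=\sum_{s=r}^n\tfrac{s!}{(s-r)!}y^i_{s+1-r}\vf{y^i_s}$: the component of $\Delta^r$ along $\vf{y^i_{t-1}}$ is $\tfrac{(t-1)!}{(t-1-r)!}y^i_{t-r}$ and is nonzero only for $t\ge r+1$, so $S(\Delta^r)=\sum_{t=r+1}^n\tfrac{t!}{(t-1-r)!}y^i_{t-r}\vf{y^i_t}$, which after relabelling $t$ as $s$ is exactly the expression for $\Delta^{r+1}$ (note $(s-1-r)!=(s-(r+1))!$ and $y^i_{s-r}=y^i_{s+1-(r+1)}$). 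The remaining identities are then formal: iterating $S(\Delta^r)=\Delta^{r+1}$ gives $S^r(\Delta^1)=\Delta^{r+1}$, and composing with $S(\G)=\Delta^1$ from part (1) gives $S^{r+1}(\G)=S^r(\Delta^1)=\Delta^{r+1}$.

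There is no real obstacle here; the only point needing care is the factorial and index bookkeeping in the middle step --- tracking the shift of the summation range from $s\ge r$ to $t\ge r+1$, and checking that $t\cdot\tfrac{(t-1)!}{(t-1-r)!}=\tfrac{t!}{(t-1-r)!}$ matches the coefficient $\tfrac{s!}{(s-(r+1))!}$ in $\Delta^{r+1}$ after relabelling. Everything else is immediate from the definitions.
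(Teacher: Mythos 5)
Your computation is correct, and it is essentially the argument the paper intends: the corollary is stated there without proof, as an immediate consequence of the coordinate formulas for $S$, $\Delta^1$, $\Delta^r$ and the coordinate form of a differential equation field, which is exactly what you have verified. The factorial bookkeeping in the step $S(\Delta^r)=\Delta^{r+1}$ checks out (with the tacit understanding that this identity is read for $r\le n-1$, since $S(\Delta^n)=0$).
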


We shall not use the results of Corollary 3 directly,
but we mention them because they provide an alternative starting point
for the geometrical analysis of differential equation fields which may
be found elsewhere in the literature. 

We shall however use in Section 4 a result which is related to item 1
of Corollary 3, which we now explain.  We denote by $i$ the operator
of interior product of a vector field with a form, so that for example
for a 1-form $\alpha$, $i_X\alpha=\alpha(X)$.  We extend this notation
to apply to the total derivative:\ if $\alpha$ is a $1$-form on
$T^n_\circ M$ then $\iT\alpha$ is a function on $T^{n+1}_\circ M$.
Then $\iT(S\alpha)=i_{\Delta^1}(\tau_{n+1,n}^*\alpha)$, which may
conveniently be written $\iT S=i_{\Delta^1}$, with the pull-back map
understood.  Note that we assert only that this formula holds for
action on 1-forms (there is a more complicated formula for action on
forms of higher degree).  Similarly, in relation to item 2, we have
(again for action on 1-forms) $\iT S^r=i_{\Delta^r}$.

\section{Homogeneous higher-order systems}

We now consider differential equation (vector) fields on $T^n_\circ M$.

\begin{defn} 
A differential equation field $\Gamma$ on $T^n_\circ M$ is
{\em homogeneous\/} if the distribution $\D^+$ spanned by $\Gamma$ and
the $\Delta^r$ is involutive.
\end{defn}

\begin{prop}
The differential equation field $\Gamma$ is homogeneous if and only 
if there are functions $\lambda^r$, $r=1,2,\ldots,n$ such that 
\[
[\Delta^1,\Gamma]=\Gamma+\lambda^1\Delta^n,\qquad 
[\Delta^r,\Gamma]=r\Delta^{r-1}+\lambda^r\Delta^n,
\quad r=2,3,\ldots, n.
\]
When they exist, such functions must satisfy the following consistency 
conditions, where $1<r,s\leq n$, $r\neq s$:
\begin{align*}
&\Delta^1(\lambda^r)-\Delta^r(\lambda^1)=(n+1-r)\lambda^r;\\
&\Delta^r(\lambda^s)-\Delta^s(\lambda^r)=(r-s)\lambda^{r+s-1},\quad
r+s\leq n+1;\\
&\Delta^r(\lambda^s)-\Delta^s(\lambda^r)=-(n+1)(r-s),\quad
r+s= n+2;\\
&\Delta^r(\lambda^s)-\Delta^s(\lambda^r)=0,\quad
r+s>n+2.
\end{align*}
\end{prop}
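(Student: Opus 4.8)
The plan is to treat the two halves of the statement separately: the equivalence of homogeneity with the displayed bracket relations, and then the consistency conditions as a consequence of the Jacobi identity. For the first half, observe that the generators $\Gamma,\Delta^1,\ldots,\Delta^n$ of $\D^+$ are pointwise linearly independent on $T^n_\circ M$ (the $\Delta^r$ are ``triangular'' — $\Delta^s$ has no $\vf{y^i_t}$-component for $t<s$ and leading term $s!\,y^i_1\vf{y^i_s}$ — while $\Gamma$ has the nowhere-vanishing $\vf{y^i_0}$-component $y^i_1$ that none of the $\Delta^r$ share), so $\D^+$ has constant rank $n+1$. Since $[\Delta^r,\Delta^s]\in\D^+$ already by Corollary~2, $\D^+$ is involutive if and only if $[\Delta^r,\Gamma]\in\D^+$ for every $r=1,\ldots,n$.

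Next I would record the structural fact that $[\Delta^1,\Gamma]-\Gamma$ and, for $r\ge2$, $[\Delta^r,\Gamma]-r\Delta^{r-1}$ are \emph{very vertical}, i.e.\ lie in the module $\ker S$ of fields with only $\vf{y^i_n}$-components. This follows by a direct computation from the coordinate forms of $\Gamma$ and $\Delta^r$ (equivalently, by applying $S$ and using $\Lie_{\Delta^r}S=-S^r$ together with $S\Gamma=\Delta^1$ and $\Delta^r=S^r(\Gamma)=S(\Delta^{r-1})$, which give $S([\Delta^r,\Gamma])=r\Delta^r$). It then remains to identify $\D^+\cap\ker S$: applying $S$ to a combination $a\Gamma+\sum_s b_s\Delta^s$ yields $a\Delta^1+\sum_{s=1}^{n-1}b_s\Delta^{s+1}$, which vanishes only when $a=b_1=\cdots=b_{n-1}=0$ by the independence of $\Delta^1,\ldots,\Delta^n$; hence $\D^+\cap\ker S$ is exactly the set of function multiples of $\Delta^n$. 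Combining these, $[\Delta^r,\Gamma]\in\D^+$ for all $r$ is equivalent to the existence of functions $\lambda^r$ with $[\Delta^1,\Gamma]=\Gamma+\lambda^1\Delta^n$ and $[\Delta^r,\Gamma]=r\Delta^{r-1}+\lambda^r\Delta^n$ — both implications being immediate — which is the desired characterization.

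For the consistency conditions I would substitute these relations, the bracket table $[\Delta^r,\Delta^s]=(r-s)\Delta^{r+s-1}$ for $r+s\le n+1$ (and $0$ otherwise), and in particular $[\Delta^r,\Delta^n]=0$ for $r\ge2$ and $[\Delta^1,\Delta^n]=(1-n)\Delta^n$, into the Jacobi identity for the triples $(\Delta^r,\Delta^s,\Gamma)$ with $2\le r,s\le n$, $r\ne s$, and $(\Delta^1,\Delta^r,\Gamma)$ with $2\le r\le n$. After expanding — using $[\lambda^s\Delta^n,\Delta^r]=-\Delta^r(\lambda^s)\Delta^n$ for $r\ge2$ and the analogous identities — the terms not proportional to $\Delta^n$ (multiples of $\Delta^{r+s-2}$, $\Delta^{r-1}$, or $\Gamma$) cancel identically, this being precisely the Lie-algebra Jacobi identity for the $\delta^r$; what survives is a scalar multiple of $\Delta^n$, and since $\Delta^n$ is nowhere zero on $T^n_\circ M$ that scalar must vanish. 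Splitting into the ranges $r+s\le n+1$, $r+s=n+2$ and $r+s>n+2$, which govern how many structural terms are present and whether $\Delta^{r+s-2}$ coincides with $\Delta^n$, then yields precisely the four displayed identities.

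I expect the main obstacle to be the bookkeeping in the Jacobi computation: one must track which case of the bracket table applies to each sub-bracket (for instance $[\Delta^{s-1},\Delta^r]$ as against $[\Delta^s,\Delta^r]$) as $r+s$ crosses the thresholds $n+1$ and $n+2$, together with the sign and derivative terms produced by the coefficients $\lambda^r$; checking that the non-$\Delta^n$ parts cancel is a short algebraic identity, but it is where an error is most likely to creep in. The structural lemma of the second step is routine by either route, though the coordinate version requires some attention to the index ranges.
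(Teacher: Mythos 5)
Your proposal is correct and follows essentially the same route as the paper: a coordinate (or $S$-based) computation showing that $[\Delta^1,\Gamma]-\Gamma$ and $[\Delta^r,\Gamma]-r\Delta^{r-1}$ are very vertical, the identification of the very vertical part of $\D^+$ with the function multiples of $\Delta^n$, and the Jacobi identities for the consistency conditions. You simply make explicit (via $\ker S$ and the pointwise independence of $\Gamma,\Delta^1,\ldots,\Delta^n$, and via the case analysis at the thresholds $r+s=n+1$ and $r+s=n+2$) the steps the paper leaves as ``straightforward''.
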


\begin{proof}
A straightforward coordinate calculation shows that
$[\Delta^1,\Gamma]-\Gamma$ and $[\Delta^r,\Gamma]-r\Delta^{r-1}$ are 
very vertical. Thus in order for $\D^+$ to be involutive there 
must be functions $\lambda^r$ such that 
\[
[\Delta^1,\Gamma]=\Gamma+\lambda^1\Delta^n,\qquad 
[\Delta^r,\Gamma]=r\Delta^{r-1}+\lambda^r\Delta^n,
\quad r=2,3,\ldots, n.
\]
The consistency conditions follow from the Jacobi identities.
\end{proof}

In terms of coordinates the conditions for 
\[
\G = \sum_{r=0}^{n-1} y^i_{r+1} \vf{y^i_r} + \G^i \vf{y^i_n}  
\]
to be homogeneous are
\[
\Delta^1(\G^i)=(n+1)\G^i+n!\lambda^1y^i_1,\qquad 
\Delta^r(\G^i)=\frac{(n+1)!}{(n+1-r)!}y^i_{n+2-r}+n!\lambda^ry^i_1. 
\]

If $\Gamma$ is a differential equation field such that
$\D^+$ is involutive, so is $\Gamma+\mu\Delta^n$ for any function 
$\mu$ on $T^n_\circ M$, and of course it belongs to the same involutive 
distribution.

One cannot, in general, demand that for $n\geq 3$ a differential
equation field $\Gamma$ satisfies the strong conditions
\[
[\Delta^1,\Gamma]=\Gamma,\qquad [\Delta^r,\Gamma]=r\Delta^{r-1},
\quad r=2,3,\ldots, n;
\]
the Jacobi identities would be inconsistent with the bracket relations
for the $\Delta^r$.  (This point is discussed more fully in
\cite{hom}, where examples of equations which do satisfy the
conditions for $n=2$ are given.)  Recall that $\{\Delta^1,\Delta^2\}$
span a subalgebra of $\D$; it is easily verified that the conditions
\[
[\Delta^1,\Gamma]=\Gamma,\qquad [\Delta^2,\Gamma]=2\Delta^1
\]
are consistent. We shall now show that for any homogeneous 
$\hat{\Gamma}$ it is possible to find $\mu$ such that if 
$\Gamma=\hat{\Gamma}+\mu\Delta^n$ then $\Gamma$ satisfies the 
conditions above (as well as 
$[\Delta^r,\Gamma]=r\Delta^{r-1}+\lambda^r\Delta^n$ for $r>2$); in 
other words, if $\Gamma$ is homogeneous, without loss of generality we 
may assume that $\lambda^1=\lambda^2=0$. We need a couple of lemmas.

\begin{lemma}\label{subm} 
There is a submanifold $\S$ of $T^n_\circ M$ of codimension 1 such that
$\Delta^1$ is transverse to $\S$ and $\Delta^2$ is tangent to it.
There is a submanifold $\S'$ of $\S$ of codimension 1 such that
$\Delta^2|_{\S}$ is transverse to $\S'$.
\end{lemma}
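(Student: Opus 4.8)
The plan is to produce $\S$ and $\S'$ explicitly, as regular level sets of two functions built from an auxiliary Riemannian metric $g$ on $M$ (with components $g_{ij}$, regarded via $\tau_n$ as functions on $T^n_\circ M$), and to deduce the tangency and transversality assertions by computing the derivatives of these functions along $\Delta^1$ and $\Delta^2$. What makes this work with almost no effort is that neither $\Delta^1$ nor $\Delta^2$ has a $\partial/\partial y^i_0$--component, so both annihilate every function of the base coordinates $y^i_0$ alone --- in particular $g_{ij}$ and its first derivatives --- while $\Delta^2$ also has no $\partial/\partial y^i_1$--component.

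First I would take $G=g_{ij}y^i_1y^j_1$, a smooth function on $T^n_\circ M$ which is strictly positive there because the velocity $(y^i_1)$ never vanishes on $T^n_\circ M$. By the remark above $\Delta^2(G)=0$, while $\Delta^1(y^i_1)=y^i_1$ gives $\Delta^1(G)=2G$. Setting $\S=G^{-1}(1)$: the identity $\Delta^1(G)=2G$ shows that $dG$ is nowhere zero on $\S$ (so $1$ is a regular value and $\S$ is an embedded codimension-$1$ submanifold), and that $dG(\Delta^1)=2\neq0$ on $\S$, i.e.\ $\Delta^1$ is transverse to $\S$; while $dG(\Delta^2)=\Delta^2(G)=0$ everywhere shows $\Delta^2$ is tangent to $\S$. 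Non-emptiness is immediate since $G$ assumes every positive value on each fibre of $\tau_n$.

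For $\S'$ I would take $H=\dT(G)$, the total derivative of $G$; in coordinates
\[
H=2g_{ij}y^i_2y^j_1+(\partial_k g_{ij})\,y^k_1y^i_1y^j_1 ,
\]
which involves only the coordinates of order at most $2$ and so, for $n\geq2$, is a well-defined smooth function on $T^n_\circ M$. Since $\Delta^2(y^i_2)$ is the coefficient $2y^i_1$ of $\partial/\partial y^i_2$ in $\Delta^2$, and $\Delta^2$ kills everything else in $H$, one gets $\Delta^2(H)=4G$; likewise $\Delta^1(H)=3H$ (as $G$ has weight $2$ under $\Delta^1$ and $\dT$ raises the weight by one). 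Put $\S'=\S\cap H^{-1}(0)$. On $\S'$ one has $G=1$, so $dH(\Delta^2)=4\neq0$ while $\Delta^2$ is tangent to $\S$; hence $0$ is a regular value of $H|_\S$, making $\S'$ an embedded codimension-$1$ submanifold of $\S$ to which $\Delta^2|_\S$ is transverse. For non-emptiness, note that over a fixed $(y^i_0,y^i_1)$ with $G=1$ the equation $H=0$ is a single inhomogeneous linear equation on $(y^i_2)$ whose linear part $y^i_2\mapsto 2(g_{ij}y^j_1)y^i_2$ is non-trivial (the covector $g_{ij}y^j_1$ being non-zero since $g$ is non-degenerate and $(y^i_1)\neq0$), while $y^i_3,\ldots,y^i_n$ stay free.

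I do not anticipate a real obstacle: the whole argument is bookkeeping with the explicit expressions for $\Delta^1$ and $\Delta^2$, and the only mildly delicate points are the coordinate-independence of $H$ (dealt with by recognising it as $\dT G$) and the non-emptiness checks. If a merely local version of the lemma is all that is wanted, one can dispense with $H$ altogether, since $\Delta^2$ is nowhere zero on $\S$ --- its $\partial/\partial y^i_2$--components are the $2y^i_1$, not all of which vanish --- and then the flow-box theorem directly provides, near any chosen point of $\S$, a codimension-$1$ submanifold of $\S$ transverse to $\Delta^2|_\S$.
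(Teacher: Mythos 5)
Your proof is correct and follows essentially the same route as the paper: the paper takes $\varphi=\sqrt{g_{ij}y^i_1y^j_1}$ with $\S=\varphi^{-1}(1)$ and $\S'$ the zero set on $\S$ of $\varphi'=\onehalf\Gamma(\varphi)$, which cut out exactly the same submanifolds as your $G=\varphi^2$ and $H=\dT G$. The only cosmetic difference is that you verify $\Delta^2(H)=4G$ by direct coordinate computation, whereas the paper obtains $\Delta^2(\varphi')=\varphi$ from the very-verticality of $[\Delta^2,\Gamma]-2\Delta^1$; both are valid.
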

\begin{proof}
Let $g$ be any Riemannian metric on $M$.  Then
$g_{ij}y^i_1y^j_1$ is a well-defined function on $T^n M$
(because the $y^i_1$ transform as the components of tangent vectors),
and $g_{ij}y^i_1y^j_1>0$ on $T^n_\circ M$.  Let 
$\varphi(y)=\sqrt{g_{ij}y^i_1y^j_1}$, and $\S=\{y\in
T^n_\circ M:\varphi(y)=1\}$.  Then $\Delta^1(\varphi)=\varphi$ and
$\Delta^2(\varphi)=0$, so $\Delta^1$ and $\Delta^2$ are respectively
transverse and tangent to $\S$.  Next, let
$\varphi'=\onehalf\Gamma(\varphi)$ for any differential
equation field $\Gamma$ on $T^n_\circ M$.  We have
\[
\Delta^2(\varphi')=\onehalf\Delta^2(\Gamma(\varphi))
=\onehalf\Gamma(\Delta^2(\varphi))+\Delta^1(\varphi)
=\varphi,
\]
since $[\Delta^2,\Gamma]-2\Delta^1$ is very vertical. Now 
\[
2\varphi\varphi'=\onehalf\fpd{g_{ij}}{y^k}y^i_1y^j_1y^k_1+g_{ij}y^i_1y^j_2
=g_{ij}y^i_1(y^j_2+\conn jkly^k_1y^l_1),
\]
where the $\conn kij$ are the connection coefficients of the
Levi-Civita connection of the metric $g$, from which it is clear that
$\S'=\{y\in\S:\varphi'(y)=0\}$ is a codimension 1 submanifold of $\S$,
and $\Delta^2|_{\S}$ is transverse to it.
\end{proof}
\begin{lemma}\label{diffe}
Let $X$ be a complete vector field on a manifold $M$, with 1-parameter
group $\phi_t$, and $\S$ a codimension 1 submanifold of $M$ transverse
to $X$ such that $\{\phi_t(\S):t\in\R\}=M$.  Let $f$ be any smooth
function on $M$, $k$ any constant, and $z_0$ any smooth function on
$\S$.  Then there is a unique smooth function $z$ on $M$ such that
$X(z)+kz=f$ and $z|_{\S}=z_0$.  In particular, if $f=0$ and $z_0=0$ then
$z=0$.
\end{lemma}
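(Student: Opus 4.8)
The plan is to straighten the vector field $X$ and the hypersurface $\S$ simultaneously by means of the flow, thereby reducing the first-order equation $X(z)+kz=f$ to a linear scalar ODE in one variable depending smoothly on parameters. Consider the map $\Psi:\R\times\S\to M$, $\Psi(t,s)=\phi_t(s)$. Transversality of $X$ to $\S$ makes $\Psi$ a local diffeomorphism along $\{0\}\times\S$, and since each $\phi_t$ is a diffeomorphism it is a local diffeomorphism everywhere; the hypothesis $\{\phi_t(\S):t\in\R\}=M$ makes it surjective. The first step is to upgrade this to the statement that $\Psi$ is a diffeomorphism, so that $(t,s)$ serve as coordinates on $M$ in which $X=\partial/\partial t$ and $\S=\{t=0\}$.

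Granting that, write $\tilde f(t,s)=f(\phi_t(s))$ and look for $\tilde z(t,s)=z(\phi_t(s))$. The equation and the boundary condition become, for each fixed $s\in\S$,
\[
\fpd{\tilde z}{t}(t,s)+k\,\tilde z(t,s)=\tilde f(t,s),\qquad \tilde z(0,s)=z_0(s),
\]
whose unique solution (integrating factor $e^{kt}$) is
\[
\tilde z(t,s)=e^{-kt}z_0(s)+e^{-kt}\int_0^t e^{k\tau}\tilde f(\tau,s)\,d\tau.
\]
The right-hand side is visibly smooth in $(t,s)$ jointly, so $z:=\tilde z\circ\Psi^{-1}$ is a smooth function on $M$ satisfying $X(z)+kz=f$ and $z|_{\S}=z_0$.

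For uniqueness, if $z_1,z_2$ both solve the problem then $w=z_1-z_2$ satisfies $X(w)+kw=0$ and $w|_{\S}=0$; in the coordinates above this reads $\partial\tilde w/\partial t+k\tilde w=0$ with $\tilde w(0,s)=0$, so $\tilde w(t,s)=e^{-kt}\tilde w(0,s)=0$ throughout, i.e. $z_1=z_2$. The final sentence of the lemma is then the special case $f=0$, $z_0=0$, for which $z=0$ is a solution and hence the solution.

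I expect the main obstacle to be the first step: promoting $\Psi$ from a surjective local diffeomorphism to a genuine diffeomorphism, i.e. verifying injectivity. A coincidence $\phi_{t_1}(s_1)=\phi_{t_2}(s_2)$ would put the two points $s_1$ and $\phi_{t_2-t_1}(s_2)$ of $\S$ on a single integral curve of $X$; the transversality and covering hypotheses are tailored to exclude this (in the applications of the lemma $\S$ is a level set of a function that the flow of $X$ rescales monotonically, so each orbit meets $\S$ in precisely one point and $\Psi$ is manifestly bijective). Once that is in place, the rest is just the elementary theory of linear scalar ODEs with smooth parameter dependence.
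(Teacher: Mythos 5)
Your proof is correct and takes essentially the same approach as the paper's, which simply integrates $\frac{dz}{dt}+kz=f(\phi_t(x))$ along the flow line through each fixed $x\in\S$ by the integrating factor method. The injectivity issue you flag (each orbit meeting $\S$ exactly once, so that $(t,s)\mapsto\phi_t(s)$ is a genuine diffeomorphism and $z$ is well defined) is a real point that the paper's one-line proof passes over silently; as you note, it holds in the applications because $\S$ is a level set of a function that the flow rescales monotonically.
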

\begin{proof}
One can integrate the differential equation 
\[
\frac{dz}{dt}+kz=f(\phi_t(x)),
\]
where $x$ is any fixed point of $\S$, by the integrating factor method.
\end{proof}\noindent
(The restriction that $k$ should be constant covers the situation 
encountered below, but clearly more general equations could be 
considered.)

\begin{thm}\label{spray}
Let $\hat{\Gamma}$ be a homogeneous higher-order differential equation
field on $T^n_\circ M$, so that $\D^+$ is involutive.  Then there is a
higher-order differential equation field
$\Gamma=\hat{\Gamma}+\mu\Delta^n\in\D^+$ such that 
\[
[\Delta^1,\Gamma]=\Gamma,\quad [\Delta^2,\Gamma]=2\Delta^1.
\]
\end{thm}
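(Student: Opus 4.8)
The plan is to look for $\Gamma$ in the form $\Gamma=\hat\Gamma+\mu\Delta^n$, to turn the two required bracket identities into a pair of first-order partial differential equations for the single unknown $\mu$, and then to produce a global smooth solution of that (overdetermined) pair by integrating along the flows of $\Delta^1$ and $\Delta^2$ with the help of Lemmas~\ref{subm} and~\ref{diffe}. The consistency conditions of the Proposition characterising homogeneity will be exactly what makes the pair solvable.

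First I would do the bracket bookkeeping. Since $\hat\Gamma$ is homogeneous there are smooth functions $\hat\lambda^1,\hat\lambda^2$ with $[\Delta^1,\hat\Gamma]=\hat\Gamma+\hat\lambda^1\Delta^n$ and $[\Delta^2,\hat\Gamma]=2\Delta^1+\hat\lambda^2\Delta^n$; combining these with $[\Delta^1,\Delta^n]=(1-n)\Delta^n$ and $[\Delta^2,\Delta^n]=0$ (the bracket relations for the $\Delta^r$), one gets
\[
[\Delta^1,\Gamma]-\Gamma=\bigl(\hat\lambda^1+\Delta^1(\mu)-n\mu\bigr)\Delta^n,\qquad
[\Delta^2,\Gamma]-2\Delta^1=\bigl(\hat\lambda^2+\Delta^2(\mu)\bigr)\Delta^n .
\]
Since $\Delta^n$ is nowhere zero, the identities $[\Delta^1,\Gamma]=\Gamma$ and $[\Delta^2,\Gamma]=2\Delta^1$ are therefore equivalent to the pair
\[
\Delta^1(\mu)-n\mu=-\hat\lambda^1,\qquad \Delta^2(\mu)=-\hat\lambda^2 .
\]
Thus the theorem reduces to finding a smooth $\mu$ on $T^n_\circ M$ satisfying this pair; note that $\Gamma=\hat\Gamma+\mu\Delta^n$ is then automatically a differential equation field lying in $\D^+$.

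To build $\mu$ I would use that $\Delta^1$ and $\Delta^2$ are complete on $T^n_\circ M$ (they generate the one-parameter groups $R_{\exp(t\delta^1)}$ and $R_{\exp(t\delta^2)}$ of the $L^n$-action), that each orbit of $\Delta^1$ meets the hypersurface $\S$ of Lemma~\ref{subm} exactly once (because $\Delta^1(\varphi)=\varphi$, so $\varphi$ scales by $e^t$ along the flow), and that each orbit of $\Delta^2|_{\S}$ meets $\S'$ exactly once (because $\Delta^2(\varphi')=\varphi\equiv1$ on $\S$, so $\varphi'$ grows at unit rate along the flow). Then: set $\mu=0$ on $\S'$; apply Lemma~\ref{diffe} on $\S$, with vector field $\Delta^2|_{\S}$, constant $0$, source $-\hat\lambda^2|_{\S}$ and initial data $0$ on $\S'$, to get a smooth $\mu_{\S}$ on $\S$ with $\Delta^2(\mu_{\S})=-\hat\lambda^2$ on $\S$; and apply Lemma~\ref{diffe} on $T^n_\circ M$, with vector field $\Delta^1$, constant $-n$, source $-\hat\lambda^1$ and initial data $\mu_{\S}$ on $\S$, to get a smooth $\mu$ on $T^n_\circ M$ with $\Delta^1(\mu)-n\mu=-\hat\lambda^1$ everywhere and $\mu|_{\S}=\mu_{\S}$. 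By construction the first equation of the pair holds on all of $T^n_\circ M$, and the second holds on $\S$.

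The remaining step, which I expect to be the crux, is to promote $\Delta^2(\mu)=-\hat\lambda^2$ from $\S$ to all of $T^n_\circ M$; this is where the consistency conditions are used. Put $\nu=\Delta^2(\mu)+\hat\lambda^2$, so $\nu|_{\S}=0$. Using $[\Delta^1,\Delta^2]=-\Delta^2$, the already-established relation $\Delta^1(\mu)=n\mu-\hat\lambda^1$, and the consistency condition $\Delta^1(\hat\lambda^2)-\Delta^2(\hat\lambda^1)=(n-1)\hat\lambda^2$, a short computation gives $\Delta^1(\nu)-(n-1)\nu=0$. The uniqueness clause of Lemma~\ref{diffe} (applied on $T^n_\circ M$ with vector field $\Delta^1$, constant $-(n-1)$, vanishing source and vanishing data on $\S$) then forces $\nu\equiv0$, so $\Delta^2(\mu)=-\hat\lambda^2$ throughout and $\Gamma=\hat\Gamma+\mu\Delta^n$ is the field sought. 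The only points left are routine: that $[\Delta^1,\hat\Gamma]-\hat\Gamma$ and $[\Delta^2,\hat\Gamma]-2\Delta^1$ are the multiples $\hat\lambda^1\Delta^n$, $\hat\lambda^2\Delta^n$ (the content of the homogeneity Proposition applied to $\hat\Gamma$), and the verification that the hypersurfaces of Lemma~\ref{subm} meet the two flows exactly once.
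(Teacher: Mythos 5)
Your proposal is correct and follows essentially the same route as the paper: reduce the two bracket identities to a pair of first-order equations for $\mu$, solve them using the submanifolds of Lemma~\ref{subm} and the integration Lemma~\ref{diffe} (the paper solves the $\Delta^1$-equation first and then fixes the data on $\S$ by the $\Delta^2$-equation, you do it in the opposite order, which is the same thing), and finally propagate the vanishing of the new $\lambda^2$ off $\S$ by combining the consistency condition with the uniqueness clause of Lemma~\ref{diffe}. The one discrepancy is the constant in the $\Delta^1$-equation --- the paper writes $\Delta^1(\mu)+(2-n)\mu=-\hat{\lambda}^1$ whereas $[\Delta^1,\Delta^n]=(1-n)\Delta^n$ gives your $\Delta^1(\mu)-n\mu=-\hat{\lambda}^1$ --- but since Lemma~\ref{diffe} holds for an arbitrary constant $k$ this has no effect on the argument.
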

\begin{proof}
Let $\S$ be a codimension 1 submanifold of $T^n_\circ M$ such that $\Delta^1$ is
transverse to $\S$ and $\Delta^2$ is tangent to it, and $\S'$ a 
codimension 1 submanifold of $\S$ such that $\Delta^2|_{\S}$ is 
transverse to $\S'$.

We may write
\[
[\Delta^1,\hat{\Gamma}]=\hat{\Gamma}+\hat{\lambda}^1\Delta^n,\qquad 
[\Delta^2,\hat{\Gamma}]=2\Delta^1+\hat{\lambda}^2\Delta^n.
\]
Then with $\Gamma=\hat{\Gamma}+\mu\Delta^n$,
\[
[\Delta^1,\Gamma]=\Gamma+(\Delta^1(\mu)+(2-n)\mu+\hat{\lambda}^1)\Delta^n.
\]
By Lemma \ref{diffe} we can solve the equation
$\Delta^1(\mu)+(2-n)\mu=-\hat{\lambda}^1$, assigning the value of $\mu$ on
$\S$ arbitrarily.  Also,
\[
[\Delta^2,\Gamma]=2\Delta^1+(\Delta^2(\mu)+\hat{\lambda}^2)\Delta^n.
\]
Now $\Delta^2$ is tangent to $\S$, and $\mu$ is undetermined on that
submanifold, so we may solve the equation
$\Delta^2(\mu)=-\hat{\lambda}^2$ there, assigning the value of $\mu$
on $\S'$ arbitrarily.  With $\mu$ satisfying these two equations, for
$\Gamma$ we have $\lambda^1=0$ everywhere on $T^n_\circ M$ and
$\lambda^2=0$ on $\S$.  But from the consistency condition for
$\lambda^1$ and $\lambda^2$ applied to $\Gamma$ we have
$\Delta^1(\lambda^2)=(n-1)\lambda^2$ on $T^n_\circ M$.  It follows from
Lemma~\ref{diffe} and the fact that $\lambda^2=0$ on $\S_2$ that
$\lambda^2=0$ everywhere.
\end{proof}

We suggest, somewhat tentatively, that homogeneous higher-order
differential equation fields such that $[\Delta^1,\Gamma]=\Gamma$ and
$[\Delta^2,\Gamma]=2\Delta^1$ should be considered as generalized
sprays.

Finally in this section we observe that a homogeneous differential
equation field on $T^n_\circ M$ does indeed pass
to the quotient under the action of $L^{n+}$.

\begin{thm}
Let $\G$ be a homogeneous differential equation field on $T^n_\circ
M$.  Then the involutive distribution $\D^+$ defines an oriented 
line-element field on $P^+ T^n M$.
\end{thm}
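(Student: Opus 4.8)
The plan is to push $\D^+$ down through the quotient map $\rho^+_n:T^n_\circ M\to P^+T^nM$: first show $\D^+$ is invariant under the $L^{n+}$-action, so that it descends to a rank-one distribution on $P^+T^nM$, and then use the distinguished generator $\Gamma$ to fix an orientation of that line field. To begin, note the structure of $\D^+$. Since the action of $L^n$ on $T^n_\circ M$ is free (Section 2), the orbits of $L^{n+}$ are $n$-dimensional, the fundamental vector fields $\Delta^1,\ldots,\Delta^n$ are pointwise linearly independent, and the distribution $\D=\langle\Delta^1,\ldots,\Delta^n\rangle$ they span is precisely $\ker\rho^+_{n*}$. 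Moreover $\Gamma$ lies nowhere in $\D$: its $\partial/\partial y^i_0$-component is $y^i_1$, which cannot vanish for all $i$ at a point of $T^n_\circ M$, whereas every $\Delta^r$ has zero $\partial/\partial y^i_0$-component. Hence $\D^+=\langle\Gamma\rangle\oplus\D$ has rank $n+1$.

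For the invariance, let $\delta\in\l^n$ and let $\Delta^\delta$ be the corresponding fundamental vector field, which lies in $\D^+$; since $\D^+$ is involutive, $\Lie_{\Delta^\delta}$ preserves sections of $\D^+$, so the flow $R_{\exp(t\delta)}$ satisfies $R_{\exp(t\delta)*}\D^+=\D^+$. As the image of $\exp:\l^n\to L^{n+}$ contains a neighbourhood of the identity, it generates the connected group $L^{n+}$, and therefore $R_{g*}\D^+=\D^+$ for every $g\in L^{n+}$. Because $\ker\rho^+_{n*}=\D\subset\D^+$ and $\D^+$ is $L^{n+}$-invariant, $\rho^+_{n*}$ carries $\D^+$ to a well-defined distribution $\bar\D$ on $P^+T^nM$, of rank $(n+1)-n=1$; it is smooth since $\rho^+_n$ admits local sections.

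Finally, to orient $\bar\D$: by invariance I may write $R_{g*}\Gamma=a_g\Gamma+Z_g$ with $Z_g$ a section of $\D$ and $a_g$ a function on $T^n_\circ M$, and I extract $a_g$ from the $\partial/\partial y^i_0$-components. The $L^n$-action fixes the coordinates $y^i_0$ and sends $(y^i_0;y^i_1,\ldots,y^i_n)$ to $(y^i_0;\tilde y^i_1,\ldots,\tilde y^i_n)$ with each $\tilde y^i_r=\sum_{p\le r}y^i_pB_r^p(\eta)$ independent of $y^i_0$; so the Jacobian of $R_g$ is block-diagonal, $R_{g*}(\partial/\partial y^i_r)$ has no $\partial/\partial y^j_0$-part for $r\ge1$, and the $\partial/\partial y^i_0$-component of $R_{g*}\Gamma$ equals $y^i_1\circ R_{g^{-1}}=\phi'(0)^{-1}y^i_1$ (using $B_1^1(\eta)=\eta_1$ and $(\phi^{-1})'(0)=1/\phi'(0)$ for $g=j^n_0\phi$). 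Comparing with the $\partial/\partial y^i_0$-component $y^i_1$ of $\Gamma$, and using that some $y^i_1$ is nonzero at each point, gives $a_g=1/\phi'(0)$, which is strictly positive precisely because $g\in L^{n+}$. Hence $\rho^+_{n*}(\Gamma_y)$ is a nonzero vector in $\bar\D_{[y]}$ whose ray $\R_{>0}\,\rho^+_{n*}(\Gamma_y)$ is independent of the representative $y$ over $[y]$; this ray, smooth by a local section of $\rho^+_n$, is the oriented line-element field determined by $\D^+$. The real content is this last sign computation $a_g=1/\phi'(0)>0$: it is the only place the restriction to $L^{n+}$ (equivalently, to $P^+T^nM$ rather than $PT^nM$) is used, and it is exactly what makes the line field oriented; the invariance of $\D^+$ is formal, coming from involutivity and connectedness of $L^{n+}$.
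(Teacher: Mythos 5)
Your proof is correct and takes essentially the same route as the paper's, which simply asserts that $\D^+$ ``evidently projects'' to a line-element field and orients it by the positive multiples of $\rho^+_{n*}\Gamma_{j^n_0\gamma}$ at any representative point. You supply the details the paper leaves implicit --- the $L^{n+}$-invariance of $\D^+$ via involutivity and connectedness, and the computation $a_g = 1/\phi'(0) > 0$ showing the chosen ray is independent of the representative --- so no further comment is needed.
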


\begin{proof}
The distribution $\D^+$ evidently projects to a line-element field on
$P^+ T^n M$.  To orient it we take, at any point
$\rho_n^+(j^n_0\gamma)$, the positive multiples of $\rho^+_{n*}
\Gamma_{j^n_0\gamma}$ for any representative point $j^n_0\gamma \in
T^n_\circ M$.
\end{proof}

\section{Parametric Lagrangians}
To show that the theory described above is not vacuous, we devote this
section to an important source of examples, the higher-order
differential equation fields obtained from parametric variational
problems.  (Related results, but for second-order variational problems
only, may be found in \cite{MM}.  Multiple-integral parametric
variational problems are discussed at length in \cite{CS2}, and indeed
the methods used here are related to those used in that paper, but of
course specialized to the case of a single integral.)

Let $L$ be a Lagrangian function, defined on $T^n_\circ M$.  The
\emph{Hilbert form} of $L$ is the 1-form
\[
\vt = \sum_{p=0}^{n-1} \frac{(-1)^p}{(p+1)!} \dT^p S^{p+1} dL
\]
defined on $T^{2n-1}_\circ M$.  This may be used to construct the
Euler-Lagrange form of the Lagrangian,
\[
\ve = dL - \dT \vt
\]
defined on $T^{2n}_\circ M$.  By construction, therefore,
\[
\ve = dL - \dT \sum_{p=0}^{n-1} \frac{(-1)^p}{(p+1)!} \dT^p S^{p+1} dL
= \sum_{p=0}^n \frac{(-1)^p}{p!} \dT^p S^p dL  .
\]
We may see that $\ve$ is horizontal over $M$, because $S\dT = \dT S + 1$
(modulo pullback), so that
\[
S\dT^p = \dT^p S + p \dT^{p-1}
\]
and therefore
\[
S\dT^p S^p = \dT^p S^{p+1} + p \dT^{p-1} S^p  .
\]
Thus $S\ve$ is a collapsing sum, and indeed
\[
S\ve = \sum_{p=0}^n \frac{(-1)^p}{p!} \dT^p S^{p+1} dL 
+ \sum_{p=1}^n \frac{p(-1)^p}{p!} \dT^{p-1} S^p dL
= \frac{(-1)^n}{n!} \dT^n S^{n+1} dL = 0
\]
as $L$ has order $n$.
\begin{lemma}
The Euler-Lagrange form $\ve$ vanishes along the extremals of any 
fixed-endpoint variational problem defined by $L$.
\end{lemma}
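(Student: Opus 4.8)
The plan is to prove the lemma by the classical first-variation computation, with the Hilbert form $\vt$ playing its usual role as the carrier of the boundary terms produced by integrating by parts. Fix an immersed curve $\g\colon[a,b]\to M$ and a variation $\g_u$ with $\g_0=\g$ whose $(n-1)$-jet at each of $a$ and $b$ is independent of $u$; this is what ``fixed endpoints'' means for an $n$th-order problem, and it says exactly that the variation field $V^i=\eval{\partial\g_u^i/\partial u}{u=0}$ and its derivatives $d^rV^i/dt^r$ for $0\le r\le n-1$ vanish at $t=a$ and $t=b$. By definition $\g$ is an extremal when $\eval{(d/du)}{0}\int_a^b (L\circ\jt^n\g_u)\,dt=0$ for every such $V$. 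Differentiating under the integral and using that $d/du$ commutes with $d/dt$, the integrand of the first variation is, in coordinates,
\[
\eval{\frac{d}{du}}{0}(L\circ\jt^n\g_u)=\sum_{r=0}^n\left(\fpd{L}{y^i_r}\circ\jt^n\g\right)\frac{d^rV^i}{dt^r},
\]
which, writing $V^{(k)}$ for the vector field along $\jt^k\g$ with components $d^rV^i/dt^r$ ($0\le r\le k$), is nothing but the pairing $\langle dL,V^{(n)}\rangle$ evaluated along $\jt^n\g$.

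The next step is to replace $dL$ by $\ve+\dT\vt$ (the identity defining $\ve$, valid on $T^{2n}_\circ M$). I would use the elementary interchange rule that, along prolonged curves, $\langle\dT\alpha,V^{(m+1)}\rangle=\frac{d}{dt}\langle\alpha,V^{(m)}\rangle$ for any $1$-form $\alpha$ on $T^m_\circ M$ --- a chain-rule identity that one checks first on exact forms $\alpha=df$ (where it reads $V^{(m+1)}(\dT f)=\frac{d}{dt}V^{(m)}(f)$) and then extends by $\R$-linearity and the Leibniz property $\dT(g\,dh)=(\dT g)\,dh+g\,d(\dT h)$. This turns the first-variation integrand into
\[
\langle\ve,V^{(2n)}\rangle\circ\jt^{2n}\g+\frac{d}{dt}\bigl(\langle\vt,V^{(2n-1)}\rangle\circ\jt^{2n-1}\g\bigr).
\]
Now comes the point that makes the boundary term disappear. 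Since $S^{p+1}dL$ involves only the differentials $dy^i_r$ with $r\le n-p-1$, and each factor of $\dT$ raises that bound by at most one, the Hilbert form $\vt=\sum_{p=0}^{n-1}\frac{(-1)^p}{(p+1)!}\dT^pS^{p+1}dL$ contains only $dy^i_r$ with $r\le n-1$; hence $\langle\vt,V^{(2n-1)}\rangle$ is a combination of the $d^rV^i/dt^r$ with $r\le n-1$ alone, and therefore vanishes at $t=a$ and $t=b$ by the fixed-endpoint hypothesis. Integrating over $[a,b]$ kills the total-derivative term, leaving
\[
\eval{\frac{d}{du}}{0}\int_a^b L\circ\jt^n\g_u\,dt=\int_a^b\langle\ve,V^{(2n)}\rangle\circ\jt^{2n}\g\,dt.
\]

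Finally, since $S\ve=0$ (proved above) and the kernel of $S$ acting on $1$-forms is spanned by the $dy^i_0$, we may write $\ve=\ve_i\,dy^i_0$, so the right-hand side is $\int_a^b(\ve_i\circ\jt^{2n}\g)\,V^i\,dt$. For an extremal this must vanish for every admissible $V$; taking $V$ supported in an arbitrarily small parameter-interval about an interior point, lying in a single coordinate chart, and of the form $V^i=f(t)c^i$ with $f$ a bump function and the $c^i$ arbitrary constants, the fundamental lemma of the calculus of variations forces $\ve_i\circ\jt^{2n}\g=0$ for each $i$; that is, $\ve$ vanishes along $\jt^{2n}\g$, which is the assertion. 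I expect the only mildly technical ingredient to be the interchange identity $\langle\dT\alpha,V^{(m+1)}\rangle=\frac{d}{dt}\langle\alpha,V^{(m)}\rangle$ together with the index bookkeeping showing that $\vt$ sees only the derivatives of $V$ up to order $n-1$; both are routine, resting on $\dT\,dy^i_r=dy^i_{r+1}$, on $\dT$ being a Leibniz derivation on $1$-forms, and on the formula $S\dT=\dT S+1$ already recorded, and in a pinch the entire manipulation can be replaced by an explicit $n$-fold integration by parts of the coordinate expression displayed above, with $\vt$ simply read off as the accumulated boundary term.
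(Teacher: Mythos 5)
Your proof is correct and follows essentially the same route as the paper's: both compute the first variation, substitute $dL=\ve+\dT\vt$ so that the Hilbert form carries the boundary term produced by integration by parts, kill that term using the fixed-endpoint conditions, and conclude from the horizontality of $\ve$ (i.e.\ $S\ve=0$) together with the arbitrariness of the variation that $\ve$ vanishes along the extremal. The differences are purely presentational: you work with explicit coordinate variations and supply the index bookkeeping showing that $\vt$ involves only the $dy^i_r$ with $r\le n-1$, whereas the paper phrases the identical computation in terms of prolonged vector fields $X^{2n}$ and pull-backs along $\jt^{2n}\g$.
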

\begin{proof}
An extremal of such a problem is a curve $\g$ in $M$
such that
\[
\int_a^b (\jt^n \g)^* \Lie_{X^n} L \, dt = 0
\]
for any vector field $X$ on $M$ vanishing at the (fixed) image of the
endpoints of $[a,b]$, where $X^n$ denotes the prolongation of $X$ to
$T^n_\circ M$.  Then
\[
\int_a^b (\jt^n \g)^* \Lie_{X^n} L \, dt = \int_a^b (\jt^n \g)^* (i_{X^n} dL) \, dt 
+ \int_a^b (\jt^n \g)^* (di_{X^n} L) \, dt  ;
\]
but
\[
\int_a^b (\jt^n \g)^* (di_{X^n} L) \, dt = \int_a^b d((\jt^n \g)^* i_{X^n} L) \, dt 
= \bigl[ (\jt^n \g)^* i_{X^n} L \bigr]_a^b = 0
\]
as $X$ vanishes at the endpoints, so that
\[
\int_a^b (\jt^n \g)^* \Lie_{X^n} L \, dt = \int_a^b (\jt^n \g)^* (i_{X^n} dL) \, dt
= \int_a^b (\jt^{2n} \g)^* (i_{X^{2n}} (\ve + \dT \vt)) \, dt  .
\]
But now,
\begin{align*}
& \int_a^b (\jt^{2n} \g)^* (i_{X^{2n}} \dT \vt) \, dt 
= \int_a^b (\jt^{2n} \g)^* (\dT i_{X^{2n-1}} \vt) \, dt \\
& = \int_a^b d((\jt^{2n-1} \g)^* (i_{X^{2n-1}} \vt)) \, dt 
= \bigl[ (\jt^{2n-1} \g)^* (i_{X^{2n-1}} \vt) \bigr]_a^b = 0
\end{align*}
because vector field prolongations commute with total derivatives, and
the pull-back of $\dT$ to $\R$ is $d$.  Thus
\[
\int_a^b (\jt^n \g)^* \Lie_{X^n} L \, dt = \int_a^b (\jt^{2n} \g)^* (i_{X^{2n}} \ve) \, dt 
= \int_a^b (\jt^{2n} \g)^* (i_X \ve) \, dt
\]
because $\ve$ is horizontal over $M$.  As $X$ is arbitrary, it follows
that $(\jt^{2n} \g)^* \ve = 0$.
\end{proof}

Now suppose that $L$ is positively homogeneous; that is, that it
satisfies the Zermelo conditions
\[
\Delta^1(L) = L  , \qquad \Delta^r(L) = 0 \quad (r \ge 2)  .
\]
Such a Lagrangian is called a \emph{parametric Lagrangian}, and the
geodesics of the corresponding variational problem are invariant under
orientation-preserving reparametrizations~\cite{CS2}; in other words,
they are \emph{paths}.  We shall show that (subject to suitable
regularity conditions) they are also the geodesics of homogeneous
differential equation fields.  In order to investigate this, we first
establish a technical lemma.
\begin{lemma}
The Hilbert form $\vt$ and the Euler-Lagrange form $\ve$ satisfy
\[
\iT \vt = L  , \qquad \iT d\vt = - \ve
\]
(omitting the pull-back maps).
\end{lemma}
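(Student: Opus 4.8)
The plan is to deduce both identities from the ``total-derivative calculus'' on $1$-forms already assembled in Section~2, rather than from a direct coordinate computation with the explicit formula for $\vt$. The two ingredients I would use are: (i) the relation $\iT S^r = i_{\Delta^r}$ on $1$-forms (pull-backs suppressed), recorded at the end of Section~2; and (ii) the Cartan-type identities
\[
\iT\dT = \dT\iT, \qquad \dT = \iT\,d + d\,\iT
\]
for the action of the total derivative on $1$-forms. Fact (ii) is a routine coordinate check: writing a $1$-form as $\beta = b^r_i\,dy^i_r$ and using $\dT y^i_r = y^i_{r+1}$ one gets $\dT\beta = (\dT b^r_i)\,dy^i_r + b^r_i\,dy^i_{r+1}$, from which both relations follow by inspection (on functions they reduce to the defining property $\iT\,df = \dT f$). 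I would record (ii) as a short preliminary remark, or simply cite it as standard.

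For the first identity I would push $\iT$ through the powers of $\dT$ in the definition of the Hilbert form, using the commutation in (ii) repeatedly:
\[
\iT\vt = \sum_{p=0}^{n-1}\frac{(-1)^p}{(p+1)!}\,\iT\dT^p S^{p+1}dL
= \sum_{p=0}^{n-1}\frac{(-1)^p}{(p+1)!}\,\dT^p\bigl(\iT S^{p+1}dL\bigr).
\]
Now (i) gives $\iT S^{p+1}dL = i_{\Delta^{p+1}}dL = \Delta^{p+1}(L)$, and the Zermelo conditions $\Delta^1(L)=L$, $\Delta^r(L)=0$ for $r\ge2$, annihilate every term with $p\ge1$. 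Only the $p=0$ term survives, leaving $\iT\vt = \Delta^1(L) = L$.

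The second identity is then immediate: by the Cartan formula in (ii),
\[
\iT\,d\vt = \dT\vt - d\,\iT\vt = \dT\vt - dL = -(dL - \dT\vt) = -\ve,
\]
using $\iT\vt = L$ from the first part together with the definition $\ve = dL - \dT\vt$.

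The only genuinely delicate point is the bookkeeping behind (ii): $\dT$ is a vector field along the projection $\tau_{n+1,n}$, not an honest vector field, so the Cartan calculus and the commutation $\iT\dT = \dT\iT$ have to be read level by level (each $\dT$ raising the jet order by one), and one must check that the pull-back maps suppressed in ``$S\dT - \dT S = 1$'' cause no inconsistency in the present application. Once that bookkeeping is in place, everything else is the short manipulation displayed above, so I expect the write-up to be quite compact.
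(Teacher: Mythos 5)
Your proposal is correct and follows essentially the same route as the paper: commute $\iT$ past $\dT^p$, use $\iT S^{p+1}=i_{\Delta^{p+1}}$ together with the Zermelo conditions to collapse the sum to $\Delta^1(L)=L$, and then obtain the second identity from $\dT\vt=d\,\iT\vt+\iT\,d\vt$ and the definition of $\ve$. The only difference is presentational: you make explicit the appeal to the Zermelo conditions and the Cartan-type identities for $\iT$ and $\dT$, which the paper uses without further comment.
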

\begin{proof}
From $\iT \dT = \dT \iT$ and $\iT S^p = i_{\Delta^p}$ we obtain
\begin{align*}
\iT \dT^p S^{p+1} dL & = \dT^p \iT S^{p+1} dL 
= \dT^p i_{\Delta^{p+1}} dL \\
& =
\begin{cases}
i_{\Delta^1} dL = L & (p = 0) \\
0 & (p > 0)
\end{cases}
\end{align*}
so that
\[
\iT\vt = \sum_{p=0}^{n-1} \frac{(-1)^p}{(p+1)!} \iT\dT^p S^{p+1} dL = L  .
\]
In addition,
\[
\dT \vt = d \iT \vt + \iT d\vt = dL + \iT d\vt
\]
so that, from the definition of $\ve$,
\[
\ve = dL - \dT \vt = - \iT d\vt  .\qedhere
\]
\end{proof}
\begin{prop}
The characteristic distribution of the 2-form $d\vt$ satisfies
\[
\{ \G, \Delta^1, \Delta^2, \ldots, \Delta^{2n-1} \} \subset \ker d\vt
\]
whenever $\G$ is a differential equation field on $T^n_\circ M$ whose
geodesics are extremals of the variational problem with fixed
endpoints defined by $L$. 
\end{prop}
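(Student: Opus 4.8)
The statement is that the characteristic distribution (kernel) of $d\vt$ contains $\G$ and all the $\Delta^r$ for $r=1,\ldots,2n-1$, under the stated hypothesis on $\G$. I would prove this by computing $i_X d\vt$ for each candidate field $X$ and showing it vanishes, using the two identities of the preceding lemma, namely $\iT\vt = L$ and $\iT d\vt = -\ve$, together with the Zermelo conditions $\Delta^1(L)=L$, $\Delta^r(L)=0$ ($r\ge2$), the bracket relations for the $\Delta^r$, and the conclusion $(\jt^{2n}\g)^*\ve=0$ of the earlier lemma on extremals.

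\smallskip
\noindent\textbf{Step 1: the $\Delta^r$ on $T^{2n-1}_\circ M$.} The form $d\vt$ lives on $T^{2n-1}_\circ M$, so I must work with the fundamental fields $\Delta^r$ there, $r=1,\ldots,2n-1$. For $i_{\Delta^r} d\vt$ I would use Cartan's formula $\Lie_{\Delta^r}\vt = i_{\Delta^r} d\vt + d\, i_{\Delta^r}\vt$, so that $i_{\Delta^r} d\vt = \Lie_{\Delta^r}\vt - d(i_{\Delta^r}\vt)$. The point is that $\vt$ is built out of $\dT^p S^{p+1}dL$, and the $\Delta^r$ interact with both $S$ and $\dT$ in a controlled way: $S$ shifts $\Delta^r$ to $\Delta^{r+1}$ (Corollary 3, $\Delta^{r+1}=S(\Delta^r)$) and $\iT S^p = i_{\Delta^p}$ (stated at the end of Section 2), while $\Lie_{\Delta^r}$ acting on $dL = d(L)$ gives $d(\Delta^r(L))$, which is $dL$ for $r=1$ and $0$ for $r\ge2$ by Zermelo. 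I expect the upshot to be that $\Lie_{\Delta^1}\vt = \vt$ (homogeneity of $\vt$) and $\Lie_{\Delta^r}\vt$ is a combination of lower $\Delta^s$-related terms for $r\ge2$; combined with $i_{\Delta^r}\vt$ being either $L$ (for $r=1$, by the previous lemma) or zero (for $r\ge2$, since $\iT S^{p+1} = i_{\Delta^{p+1}}$ and $\Delta^{p+1}$ contracted against $\dT^p S^{p+1}dL$ collapses as in that lemma), the terms should cancel to give $i_{\Delta^r} d\vt = 0$. This is the computational heart and the place where I'd be most careful; it is essentially a repeat of the "collapsing sum" argument used to show $S\ve=0$ and $\iT\vt=L$.

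\smallskip
\noindent\textbf{Step 2: the field $\G$.} For $\G$ itself I would write $i_\G d\vt = \Lie_\G\vt - d(i_\G\vt)$ and use $\iT d\vt = -\ve$, i.e. $\iT\vt = L$ prolonged appropriately. The key input is that the geodesics of $\G$ are extremals, so $(\jt^{2n}\g)^*\ve=0$; but I need a pointwise statement about $i_\G d\vt$, not just something pulled back along geodesics. The bridge is that $\G$ is itself (the vector field associated to) a section $\Gt:T^n_\circ M\to T^{n+1}_\circ M$, so evaluating $d\vt$ against $\G$ at a point $j^n_0\g$ amounts to evaluating along the prolonged geodesic direction; more precisely, since $\G$ reproduces the total-derivative structure ($\iT = $ pull-back of $d$ along jets), contracting $d\vt$ with $\G$ and using $\iT d\vt = -\ve$ shows $i_\G d\vt$ agrees, after the appropriate pull-back by a section of $\tau_{2n,2n-1}$ built from $\G$, with $-\ve$ composed with that section — and the extremal condition forces this to vanish. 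I would phrase this as: $\G$ generates (via its prolongation) integral curves that are prolonged geodesics, $\ve$ is horizontal over $M$, and $(\jt^{2n}\g)^*\ve = 0$, hence $\ve$ vanishes on the image of the relevant prolonged section, which is exactly the condition $i_\G d\vt = 0$ read through $\iT d\vt = -\ve$.

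\smallskip
\noindent\textbf{Main obstacle.} The genuine difficulty is Step 1 — bookkeeping the action of $\Lie_{\Delta^r}$ on the alternating sum defining $\vt$, keeping straight which $\Delta$-index appears after each commutation with $S$ and $\dT$, and verifying the telescoping cancellation for all $r$ up to $2n-1$ (not just $r\le n$, where $\Delta^r(L)$ makes sense — for $r>n$ one has $\Delta^r(L)=0$ trivially since $L$ has order $n$, but one must still track the $S$-shifts carefully). Step 2 is conceptually the subtle one but technically short once one accepts the identification of $\G$'s dynamics with prolonged geodesics and invokes the extremal lemma; the horizontality $S\ve=0$ established above is what lets one replace $i_{X^{2n}}\ve$ by $i_X\ve$ and thereby reduce to the curve-level statement.
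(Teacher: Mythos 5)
Your overall architecture matches the paper's: Cartan's formula $i_{\Delta^r}d\vt=\Lie_{\Delta^r}\vt-d\,i_{\Delta^r}\vt$ for the fundamental fields, and the identity $\iT d\vt=-\ve$ together with extremality of the geodesics for $\G$. Your Step 2 is in substance the paper's argument (the paper makes it pointwise by writing $\G_{j^{2n-1}_0\g}=i(j^{2n}_0\g)$ and reading off $d\vt(\G,\xi)=\langle-\ve_{j^{2n}_0\g},\xi\rangle=0$ from the extremal condition). But Step 1 rests on intermediate identities that are wrong, and wrong in a way that would prevent the cancellation you are counting on. You claim $i_{\Delta^1}\vt=L$ ``by the previous lemma'' and expect $\Lie_{\Delta^1}\vt=\vt$; the previous lemma asserts $\iT\vt=L$, and $\iT$ (contraction with the total derivative $\dT$) is not $i_{\Delta^1}$. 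The correct facts, which the paper simply imports from \cite{CS2} (Proposition~6.1 and Theorem~6.4) and which are precisely where the Zermelo conditions do their work, are $i_{\Delta^r}\vt=0$ and $\Lie_{\Delta^r}\vt=0$ for all $1\le r\le 2n-1$: the Hilbert form of a parametric Lagrangian annihilates the $\Delta^r$ and is \emph{invariant} (not homogeneous of degree one) under their flows. With your values Cartan's formula would give $i_{\Delta^1}d\vt=\vt-dL$, which is not zero, so the computation as you have set it up cannot close. The case $n=1$ already shows this: $\vt=(\partial L/\partial y^i_1)\,dy^i$, $\Delta^1=y^i_1\,\partial/\partial y^i_1$ is vertical, so $i_{\Delta^1}\vt=0$, and $\Delta^1(L)=L$ makes $\partial L/\partial y^i_1$ homogeneous of degree zero, whence $\Lie_{\Delta^1}\vt=0$.

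A second, smaller point: your justification of $i_{\Delta^r}\vt=0$ for $r\ge2$ via ``the collapsing sum as in that lemma'' again borrows a computation about $\iT\dT^pS^{p+1}dL$; to evaluate $i_{\Delta^r}\dT^pS^{p+1}dL$ you must commute $i_{\Delta^r}$ past the $\dT^p$, which is a different (and nontrivial) piece of bookkeeping. So while the conclusion of Step 1 is true, the route you sketch would fail at $r=1$ and is unsubstantiated for $r\ge2$; either carry out the commutation identities honestly or cite the invariance results for the Hilbert form directly, as the paper does.
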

 
\begin{proof}
We remark first that $\vt$ satisfies $i_{\Delta^r}\vt = 0$ and $\Lie_{\Delta^r} \vt = 0$ for $1 \le r \le 2n-1$ (\cite{CS2}, Proposition~6.1 and Theorem~6.4). It follows  that
\[
i_{\Delta^r} d\vt = \Lie_{\Delta^r} \vt - d i_{\Delta^r} \vt = 0  ,
\]
and so $\{ \Delta^1, \Delta^2, \ldots, \Delta^{2n-1} \} \subset \ker d\vt$.

Now let $\Gt$ be a differential equation field on $T^{2n-1}_\circ M$, so that
\[
\Gt : T^{2n-1}_\circ M \to T^{2n}_\circ M  ,
\]
and let $\G : T^{2n-1}_\circ M \to i \bigl( T^{2n}_\circ M \bigr)
\subset TT^{2n-1}_\circ M$ be the corresponding vector field.

Suppose in particular that the integral curves of $\G$ (which must,
necessarily, be prolongations $\jt^{2n-1}\g$ of curves $\g$ in $M$) are
such that $\g$ is always an extremal of the variational problem.  Each
point of $T^{2n-1}_\circ M$ must lie on such an integral curve, and we
may suppose (by translation in the domain) that the point in question
is $\jt^{2n-1}\g(0)$, in other words that the point may be written as
$j^{2n-1}_0\g$.

The integral curve property means that $\Gt(j^{2n-1}_0\g) =
j^{2n}_0\g$, so that
\[
\G_{j^{2n-1}_0\g} = (i \circ \Gt)(j^{2n-1}_0\g) = i(j^{2n}_0\g)  ;
\]
thus for any vector $\xi\in T_{j^{2n-1}_0\g} T^{2n-1}_\circ M$ we have
\begin{align*}
d\vt_{j^{2n-1}_0\g} \bigl( \G_{j^{2n-1}_0\g}, \; \xi \bigr)
& = d\vt_{j^{2n-1}_0\g} \bigl( i(j^{2n}_0\g), \; \xi \bigr) \\
& = \bigl\langle (\iT d\vt)_{j^{2n}_0\g}, \; \xi \bigr\rangle \\
& = \bigl\langle -\ve_{j^{2n}_0\g}, \; \xi \bigr\rangle \\
& = 0
\end{align*}
because $\g$ is an extremal of $L$ so that $\ve_{j^{2n}_0\g} = 0$.
This calculation also uses the fact that the contraction $\iT$ is
just the inclusion map $i : T^{2n}_\circ M \to TT^{2n-1}_\circ M$ in
disguise.
\end{proof}
We say that the Lagrangian is \emph{regular} if the characteristic
distribution of $d\vt$ has dimension $2n$. Any differential equation
field in this distribution is an Euler-Lagrange field of the
variational problem.  Since the characteristic distribution of a
closed 2-form is involutive we have the following
theorem.

\begin{thm}
Any Euler-Lagrange field of a regular parametric Lagrangian is 
homogeneous.
\end{thm}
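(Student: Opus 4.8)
The plan is to identify the homogeneity distribution $\D^+$ of an Euler-Lagrange field $\G$ with the characteristic distribution $\ker d\vt$, and then to read off involutivity of $\D^+$ from the fact that $d\vt$ is closed. The first ingredient is already available: the opening part of the proof of the preceding proposition — the one establishing $i_{\Delta^r}\vt=0$, $\Lie_{\Delta^r}\vt=0$ and hence $i_{\Delta^r}d\vt=\Lie_{\Delta^r}\vt-d\,i_{\Delta^r}\vt=0$ for $1\le r\le 2n-1$ — does not involve $\G$ at all, so it shows $\Delta^1,\ldots,\Delta^{2n-1}\in\ker d\vt$ unconditionally. Since, by the very definition of an Euler-Lagrange field, $\G$ also lies in the characteristic distribution of $d\vt$, the distribution $\D^+$ spanned by $\G$ and the $\Delta^r$ is contained in $\ker d\vt$.

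It remains to promote this inclusion to an equality, and for that I would count dimensions. The $2n$ vector fields $\G,\Delta^1,\ldots,\Delta^{2n-1}$ on $T^{2n-1}_\circ M$ are pointwise linearly independent: from the coordinate formula the component of $\Delta^r$ along $\vf{y^i_r}$ is $r!\,y^i_1$, while $\Delta^r$ involves only the $\vf{y^i_s}$ with $s\ge r$, so a relation $\sum_r c_r\Delta^r=0$ yields $c_1=0$ (inspect the component along $\vf{y^i_1}$ and use that some $y^i_1\ne 0$ on the slit bundle), then $c_2=0$, and so on; moreover $\G$ is the only one of the $2n$ fields with a non-zero component along $\vf{y^i_0}$. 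Hence $\D^+$ has constant rank $2n$. Regularity of the Lagrangian says $\ker d\vt$ has constant rank $2n$ as well, and a distribution of rank $2n$ contained in another of the same rank coincides with it; therefore $\D^+=\ker d\vt$.

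The characteristic distribution of the closed $2$-form $d\vt$ is involutive — as already recorded in the text immediately preceding the statement — so $\D^+$ is involutive, which is exactly the assertion that $\G$ is homogeneous. I do not expect a real obstacle here: all the substantive work was carried out in the preceding proposition, and the only step deserving genuine attention is the elementary linear-independence computation that turns the containment $\D^+\subseteq\ker d\vt$ into an equality of constant-rank distributions.
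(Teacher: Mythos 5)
Your proposal is correct and follows essentially the same route as the paper, which defines an Euler--Lagrange field precisely as a differential equation field in the characteristic distribution of $d\vt$, invokes the preceding proposition for $\Delta^1,\ldots,\Delta^{2n-1}\in\ker d\vt$, and concludes from regularity (rank $2n$) and the involutivity of the characteristic distribution of a closed $2$-form. Your explicit linear-independence count making $\D^+=\ker d\vt$ is a detail the paper leaves implicit, but it is the intended argument.
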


\section{Projective equivalence and reparametrization}

We now return to the study of general higher-order differential
equation fields.  

\begin{defn} Let $\Gamma$, $\Gamma'$ be homogeneous.
Then if $\Gamma'-\Gamma=\mu\Delta^n$ for some function $\mu$ we say
that $\Gamma$ and $\Gamma'$ are {\em projectively equivalent}.
\end{defn}

Projective equivalence is an equivalence relation.  The projective
equivalence class of a homogeneous differential equation field
consists of all the differential equation fields in the involutive
distribution $\D^+$.  We may summarize the result of
Theorem~\ref{spray} by saying that every projective equivalence class
of a homogeneous differential equation field contains a generalized
spray.

We have seen that the Euler-Lagrange fields of a regular parametric
Lagrangian consist of a projective equivalence class of homogeneous
differential equation fields.  So we should expect in this case that
the geodesics of projectively equivalent homogeneous differential
equation fields should define the same paths, that is, differ only in
parametrization.  In this section we shall show that this is true for
any projective equivalence class of homogeneous differential equation
fields, whether or not they come from a parametric Lagrangian.

In fact we shall prove two results about projective equivalence in
homogeneous systems:\ firstly, if $\Gamma'$ is projectively equivalent
to $\Gamma$ then a geodesic of $\Gamma'$ is a reparametrization of a
geodesic of $\Gamma$; secondly, the jet group acts on differential
equation fields in such a way as to map any homogeneous field to one
which is projectively equivalent to it.

\begin{thm}\label{projeq}
Let $\Gamma$ be a homogeneous differential equation field, and
$\Gamma'$ another which is projectively equivalent to it.  Let
$\gamma'$ be a geodesic of $\Gamma'$.  Then there is a geodesic
$\gamma$ of $\Gamma$ such that $\gamma'$ is obtained from $\gamma$ by
an orientation-preserving reparametrization.
\end{thm}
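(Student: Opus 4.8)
The plan is to exploit the involutivity of $\D^+$ together with the homogeneity relations recorded in Proposition~2. Since $\Gamma'$ is projectively equivalent to $\Gamma$, we may write $\Gamma' = \Gamma + \mu\Delta^n$ for some function $\mu$ on $T^n_\circ M$, and both fields lie in the same involutive distribution $\D^+$ spanned by $\Gamma$ and the $\Delta^r$. A geodesic $\gamma'$ of $\Gamma'$ is the base projection of an integral curve $\tilde\gamma' = \jt^n\gamma'$ of the vector field $\Gamma'$ on $T^n_\circ M$. The idea is that, because $\Gamma$ and $\Gamma'$ differ only by a multiple of $\Delta^n$ and the $\Delta^r$ are the fundamental vector fields of the $L^n$-action, moving along $\Gamma'$ differs from moving along $\Gamma$ only by a "flow along the group orbit", which on the level of base curves is precisely a reparametrization.

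Concretely, I would first argue that the integral curve $\tilde\gamma'$ of $\Gamma'$ through a point lies in the leaf of $\D^+$ through that point (trivially, since $\Gamma' \in \D^+$). On that leaf one has the $\Delta^r$ available, and the key computation is to show that a suitable reparametrization $x \mapsto \phi(x)$, i.e.\ an element of $L^{n+}$ depending on the parameter, carries $\tilde\gamma'$ onto an integral curve of $\Gamma$. The natural way to set this up is to look for a curve $t \mapsto h(t) \in L^{n+}$ (or just a scalar reparametrization function generating such an element via the identification in Section~2) such that the reparametrized prolonged curve satisfies the $\Gamma$-equation. Using $\Gamma' = \Gamma + \mu\Delta^n$ and the bracket relation $[\Delta^1,\Gamma] = \Gamma + \lambda^1\Delta^n$ (and the analogous ones for the other $\Delta^r$), one should be able to reduce the condition on $h$ to a first-order ODE along $\tilde\gamma'$, which has a solution with $h(0) = 1_{L^{n+}}$ and, crucially, $h$ orientation-preserving near $0$. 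That ODE plays the role here that the simple dilation reparametrization plays in the classical spray case.

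The main obstacle I expect is bookkeeping: one must verify that the infinitesimal action of $L^{n+}$ on the prolonged curve, combined with its own prolongation structure, interacts correctly with the total-derivative form of $\Gamma$ — i.e.\ that reparametrizing the underlying curve $\gamma'$ by $\phi$ genuinely produces the prolongation $\jt^n(\gamma'\circ\phi)$ and that this prolongation is an integral curve of $\Gamma$ precisely when $h = j^n\phi$ solves the ODE above. This is where the description of $T^{n+1}M \subset TT^nM$ via $\dT$ and the explicit form of the $\Delta^r$ from Section~2 get used heavily. A clean way to package it is to work on the leaf of $\D^+$ and note that the quotient of the leaf by the (local) $L^{n+}$-action is one-dimensional, so any two differential equation fields in $\D^+$ project to the same vector field there up to scale; then integral curves differ only by the group action, which downstairs is reparametrization. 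I would also need to check that the reparametrization is defined on a genuine interval and is orientation-preserving, which follows from $h(0)$ being the identity and continuity, together with the sign condition characterizing $L^{n+}$.

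Finally, having produced $\phi$ with $\gamma' = \gamma\circ\phi$ for $\gamma$ a geodesic of $\Gamma$ and $\phi'(0) > 0$, one concludes by invoking that $\gamma$ is indeed a geodesic of $\Gamma$ because its prolongation is the image of $\tilde\gamma'$ under the group element $h$, hence an integral curve of $\Gamma$ by construction; and the reparametrization is orientation-preserving by the sign of $\phi'$. I would present the argument in the order: (1) set up $\Gamma' = \Gamma + \mu\Delta^n$ and the integral curve $\tilde\gamma'$; (2) pass to the leaf of $\D^+$ and set up the local $L^{n+}$-action on it; (3) write down and solve the ODE for the reparametrizing group element; (4) translate back to base curves and check orientation. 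Step~(3), the solvability and correct interpretation of that ODE, is the crux.
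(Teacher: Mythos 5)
Your main line of attack --- constructing a parameter-dependent element $h(t)\in L^{n+}$ by solving an ODE along $\jt^n\gamma'$ so that the reparametrized prolongation becomes an integral curve of $\Gamma$ --- is not what the paper does, and as written it has a gap: step (3), which you yourself identify as the crux, is only asserted (``one should be able to reduce the condition on $h$ to a first-order ODE''). Deriving that ODE and checking that it is consistent --- i.e.\ that after using the fact that $\gamma'$ is a geodesic of $\Gamma'=\Gamma+\mu\Delta^n$ and the coordinate form of the homogeneity conditions, the residual top-order terms in the Fa\`{a} di Bruno expansion of $(\gamma'\circ\psi)^{(n+1)}$ really are a multiple of $y^i_1$, so that $\psi^{(n+1)}$ can be solved for --- is essentially the entire difficulty of the theorem, and none of it is carried out. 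So the constructive route is viable in principle but incomplete as a proof.

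The paper's actual proof is, in effect, the ``clean packaging'' you mention in a single sentence near the end, and it avoids the ODE entirely. Let $\L$ be the leaf of $\D^+$ containing $\jt^n\gamma'$. Since $\D\subset\D^+$, $\L$ is invariant under $L^{n+}$ and is swept out by the $L^{n+}$-orbits of the points of $\jt^n\gamma'$. Because the action is fibred over the identity on $M$ (that is, $\tau_n\circ g=\tau_n$ for $g\in L^{n+}$), $\tau_n(\L)$ is exactly the image of $\gamma'$: a one-dimensional oriented submanifold of $M$. Any integral curve of $\Gamma$ meeting $\L$ stays in $\L$, so its base projection $\gamma$ is an immersion (as $y^i_1\neq0$ on $T^n_\circ M$) into that same oriented one-dimensional submanifold, and two such immersions differ by an orientation-preserving reparametrization. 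Note that this yields the conclusion without ever exhibiting the reparametrization, and that the two prolonged geodesics are \emph{not} related pointwise by the group action; your phrase ``integral curves differ only by the group action, which downstairs is reparametrization'' conflates two different things, since the group action does nothing downstairs. If you adopt the leaf argument, the step to make explicit is precisely that $\tau_n(\L)$ is one-dimensional; everything else is routine.
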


\begin{proof}
By assumption the distribution $\D^+$ is involutive, and thus
integrable.  Let $\L$ be a leaf of $\D^+$.  Then $\Gamma'$ is tangent
to $\L$, and any integral curve of $\Gamma'$ which meets $\L$ lies in
it.  If $\gamma'$ is a geodesic of $\Gamma'$ then $\jt^n\gamma'$ is an
integral curve of $\Gamma'$:\ let $\L$ be in fact the leaf of $\D^+$
containing (the image of) $\jt^n\gamma'$.  Since $\D\subset\D^+$, $\L$
is invariant under the action of $L^{n+}$, the identity component of
the jet group $L^n$, and consists of the orbits of points on
$\jt^n\gamma'$ under the action of $L^{n+}$.  Under projection
$\tau_n:T^n_\circ M\to M$, we have of course $\tau_n\circ
\jt^n\gamma'=\gamma'$, and for any $g\in L^{n+}$, $\tau_n\circ
g=\tau_n$.  Thus $\tau_n(\L)$ is the 1-dimensional submanifold of $M$
which is the oriented path of $\gamma'$.  Now $\Gamma$ also belongs to
$\D^+$, and so likewise any integral curve of $\Gamma$ which meets
$\L$ lies in it.  Take any point of $\L$, and the integral curve of
$\Gamma$ through that point:\ it is of the form $\jt^n\gamma$ where
$\gamma$ is a geodesic of $\Gamma$.  Then
$\gamma=\tau_n\circ\jt^n\gamma$ lies in $\tau_n(\L)$, that is to say,
$\gamma'$ and $\gamma$ have the same oriented path.  Since we are
restricted to $(y^i_1)\neq 0$, we must assume that $\gamma$ and
$\gamma'$ have nowhere vanishing tangent vectors.  It follows that one
is a reparametrization of the other, by a reparametrization that
preserves orientation.
\end{proof}

\begin{cor}\label{repara} 
Let $\Gamma$ be a homogeneous differential equation field of order
$n+1$.  Let $\gamma$ be the geodesic of $\Gamma$ with initial
conditions $(y^i,y^i_1,\ldots,y^i_n)$.  Let $(z^i_1,\ldots,z^i_n)$ be
the image of $(y^i_1,\ldots,y^i_n)$ under some element of $L^{n+}$ (so
that $(z^i_1)$ is a positive scalar multiple of $(y^i_1)$).  Then the
geodesic of $\Gamma$ with initial conditions
$(y^i,z^i_1,\ldots,z^i_n)$ is obtained from $\gamma$ by an
orientation-preserving reparametrization.
\end{cor}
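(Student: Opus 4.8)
\textit{Proof proposal.} The plan is to deduce the corollary from the leaf-chasing argument already carried out in the proof of Theorem~\ref{projeq}, now run with $\Gamma'=\Gamma$ but with the integral curve started at a different point. Write $p=(y^i,y^i_1,\ldots,y^i_n)$ and $q=(y^i,z^i_1,\ldots,z^i_n)$, regarded as points of $T^n_\circ M$; by hypothesis $q=R_g(p)$ for some $g\in L^{n+}$. The geodesic $\gamma$ is characterised by $\jt^n\gamma$ being the integral curve of $\Gamma$ through $p$, and the geodesic $\gamma'$ whose behaviour we must describe is characterised by $\jt^n\gamma'$ being the integral curve of $\Gamma$ through $q$.

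First I would show that $p$ and $q$ lie in the same leaf of $\D^+$. Since $L^{n+}$ is connected, the orbit $R_{L^{n+}}(p)$ is a connected immersed submanifold of $T^n_\circ M$, and at each of its points its tangent space is spanned by the values of the fundamental vector fields $\Delta^1,\ldots,\Delta^n$; thus the orbit is everywhere tangent to $\D$, hence to $\D^+$. A connected submanifold everywhere tangent to an involutive distribution lies in a single leaf of that distribution, so $R_{L^{n+}}(p)$, and in particular $q$, lies in the leaf $\L$ of $\D^+$ through $p$.

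Then I would transcribe the argument of Theorem~\ref{projeq}. Because $\Gamma\in\D^+$ and $\L$ is a leaf, the integral curve of $\Gamma$ through any point of $\L$ stays in $\L$; hence both $\jt^n\gamma$ and $\jt^n\gamma'$ lie in $\L$. Applying $\tau_n$ and using $\tau_n\circ R_h=\tau_n$ for $h\in L^{n+}$, the set $\tau_n(\L)$ is a $1$-dimensional submanifold of $M$ containing the images of both $\gamma$ and $\gamma'$, and its orientation is determined by the single oriented line-element field that $\Gamma$ defines on $P^+T^nM$, so both geodesics traverse $\tau_n(\L)$ in the same sense. Since we work over $T^n_\circ M$ both curves have nowhere-vanishing tangent vectors, so $\gamma'$ is an orientation-preserving reparametrization of $\gamma$, as required.

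The hard part will be the first step: being precise that the $L^{n+}$-orbit of $p$ is tangent to $\D$ at each of its points and therefore lies in one leaf of $\D^+$. This rests on the freeness of the $L^n$-action on $T^n_\circ M$ (established earlier via Fa\`{a} di Bruno's formula), which makes $\Delta^1,\ldots,\Delta^n$ pointwise independent and genuinely spanning for the tangent spaces of the orbits, and on the connectedness of $L^{n+}$, which forces the \emph{whole} orbit, not merely a neighbourhood of $p$ in it, to remain in the single leaf $\L$ (and is also what makes the reparametrization orientation-preserving rather than merely a reparametrization). Once that is secured, the remainder is a direct rerun of the proof of Theorem~\ref{projeq}.
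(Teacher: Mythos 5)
Your proposal is correct and is essentially the argument the paper intends: the corollary is stated without a separate proof precisely because it is the leaf-chasing argument of Theorem~\ref{projeq} rerun with $\Gamma'=\Gamma$, using the fact (already noted in that proof) that each leaf of $\D^+$ is invariant under $L^{n+}$, so the two initial points lie in the same leaf and the two prolonged integral curves project to the same oriented path. Your explicit justification of the orbit being tangent to $\D$ and hence contained in a single leaf simply fills in the step the paper leaves implicit.
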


The second result is rather more complicated to prove, and requires 
some preliminaries.

We shall need to distinguish notationally between the fundamental
vector fields of the actions of $L^n$ on $T^nM$ and $L^{n+1}$ on
$T^{n+1}M$.  Let us denote by $\Delta^r_n$ the vector field operating
on $T^n M$:\ that is to say
\[
\Delta^r_n=\sum_{s=r}^{n}\frac{s!}{(s-r)!}y^i_{s+1-r}\vf{y^i_{s}}.
\]
With a bit of licence we can write, for $r=1,2,\ldots,n$,
\[
\Delta^r_{n+1}=\Delta^r_n+\frac{(n+1)!}{(n+1-r)!}y^i_{n+2-r}\vf{y^i_{n+1}},
\]
while
\[
\Delta^{n+1}_{n+1}=(n+1)!y^i_1\vf{y^i_{n+1}}.
\]
In particular,
\[
\Delta^r_{n+1}(y^i_{n+1})=\frac{(n+1)!}{(n+1-r)!}y^i_{n+2-r},\quad
\Delta^{n+1}_{n+1}(y^i_{n+1})=(n+1)!y^i_1.
\]
Now the homogeneity conditions for an $(n+1)$th order differential
equation field $\Gamma$, when expressed in terms of the $\G^i$, are
\[
\Delta^1_n(\G^i)=(n+1)\G^i+n!\lambda^1y^i_1,\quad 
\Delta^r_n(\G^i)=\frac{(n+1)!}{(n+1-r)!}y^i_{n+2-r}+n!\lambda^ry^i_1. 
\]
Comparison with the formulae for $\Delta^r_{n+1}(y^i_{n+1})$ is 
suggestive.

Consider the section $\tilde{\Gamma}$ of $T_\circ^{n+1} M\to
T_\circ^n M$ corresponding to $\Gamma$, given by
$y^i_{n+1}=\G^i(y^j_0,y^j_1,\ldots,y^j_n)$.  We have, on
$\im\tilde{\Gamma}$, with $r=2,\ldots,n$,
\begin{align*}
&\Delta^1_{n+1}(y^i_{n+1}-\G^i)=(n+1)(y^i_{n+1}-\G^i)-n!\lambda^1y^i_1
=-n!\lambda^1y^i_1\\
&\Delta^r_{n+1}(y^i_{n+1}-\G^i)=-n!\lambda^ry^i_1\\
&\Delta^{n+1}_{n+1}(y^i_{n+1}-\G^i)=(n+1)!y^i_1.
\end{align*}
One way of restating this is that for $r=1,2,\ldots,n$ the vector
fields
\[
\Delta^r_{n+1}+\frac{\lambda^r}{(n+1)}\Delta^{n+1}_{n+1}
\]
(strictly speaking one should write $\tau^*_{n+1,n}\lambda^r$ rather
than just $\lambda^r$) are tangent to $\im\tilde{\Gamma}$.  Set
\[
\Delta^r_{n+1}+\frac{\lambda^r}{(n+1)}\Delta^{n+1}_{n+1}=\tilde{\Delta}^r.
\]
The $\tilde{\Delta}^r$ are well-defined vector fields on $T^{n+1} M$.

\begin{prop}
For $r,s=1,2,\ldots,n$
\[
[\tilde{\Delta}^r,\tilde{\Delta}^s]=\left\{
\begin{array}{ll}
(r-s)\tilde{\Delta}^{r+s-1}&\mbox{if $r+s\leq n+1$}\\
0&\mbox{otherwise}
\end{array}\right..
\]
\end{prop}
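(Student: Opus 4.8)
The plan is to compute $[\tilde\Delta^r,\tilde\Delta^s]$ by a direct expansion, reducing everything to brackets of the fundamental fields $\Delta^t_{n+1}$ on $T^{n+1}M$ and to derivatives of the $\lambda^r$, and then to invoke the consistency conditions already established for the $\lambda^r$. Two preliminary observations carry most of the load. First, $\Delta^{n+1}_{n+1}=(n+1)!\,y^i_1\,\partial/\partial y^i_{n+1}$ is very vertical and the $\lambda^r$ are pulled back from $T^n_\circ M$, so $\Delta^{n+1}_{n+1}(\lambda^s)=0$; and since, for $1\le r\le n$, $\Delta^r_{n+1}$ differs from $\Delta^r_n$ only by a $\partial/\partial y^i_{n+1}$ term, we have $\Delta^r_{n+1}(\lambda^s)=\Delta^r_n(\lambda^s)$ (pull-backs suppressed throughout). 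Second, applying the bracket relations for the fundamental fields at level $n+1$ gives $[\Delta^r_{n+1},\Delta^s_{n+1}]=(r-s)\Delta^{r+s-1}_{n+1}$ when $r+s\le n+2$ and $0$ otherwise; in particular $[\Delta^r_{n+1},\Delta^{n+1}_{n+1}]=0$ for $2\le r\le n$, while $[\Delta^1_{n+1},\Delta^{n+1}_{n+1}]=-n\,\Delta^{n+1}_{n+1}$.

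Writing $a^r=\lambda^r/(n+1)$, so that $\tilde\Delta^r=\Delta^r_{n+1}+a^r\Delta^{n+1}_{n+1}$, the identity $[fX,gY]=fg[X,Y]+fX(g)Y-gY(f)X$ together with $\Delta^{n+1}_{n+1}(a^r)=0$ yields
\[
[\tilde\Delta^r,\tilde\Delta^s]
=[\Delta^r_{n+1},\Delta^s_{n+1}]
+a^s[\Delta^r_{n+1},\Delta^{n+1}_{n+1}]
-a^r[\Delta^s_{n+1},\Delta^{n+1}_{n+1}]
+\bigl(\Delta^r_n(a^s)-\Delta^s_n(a^r)\bigr)\Delta^{n+1}_{n+1}.
\]
I would then split into cases. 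If $2\le r,s\le n$, the two middle terms vanish and one is left with $[\Delta^r_{n+1},\Delta^s_{n+1}]+\tfrac1{n+1}\bigl(\Delta^r_n(\lambda^s)-\Delta^s_n(\lambda^r)\bigr)\Delta^{n+1}_{n+1}$; inserting the bracket relation at level $n+1$ and the consistency condition for $\Delta^r(\lambda^s)-\Delta^s(\lambda^r)$ (here $\Delta^r=\Delta^r_n$) gives $(r-s)\tilde\Delta^{r+s-1}$ when $r+s\le n+1$, while for $r+s=n+2$ the two contributions are $(r-s)\Delta^{n+1}_{n+1}$ and $-(r-s)\Delta^{n+1}_{n+1}$, which cancel, and for $r+s>n+2$ both vanish. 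The remaining case is $r=1$, $2\le s\le n$ (with $r=s=1$ trivial): now $a^s[\Delta^1_{n+1},\Delta^{n+1}_{n+1}]=-n a^s\Delta^{n+1}_{n+1}$ survives, and combining it with $[\Delta^1_{n+1},\Delta^s_{n+1}]=(1-s)\Delta^s_{n+1}$ and $\Delta^1_n(\lambda^s)-\Delta^s_n(\lambda^1)=(n+1-s)\lambda^s$ (the first consistency condition, with its index taken to be $s$) gives $(1-s)\Delta^s_{n+1}+\bigl(-n+(n+1-s)\bigr)a^s\Delta^{n+1}_{n+1}=(1-s)\tilde\Delta^s$, as wanted.

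The argument is essentially bookkeeping, so I do not anticipate a genuine obstacle; the only delicate point is the boundary case $r+s=n+2$ (with $r,s\ge2$), where $[\Delta^r_{n+1},\Delta^s_{n+1}]$ is itself a nonzero multiple of $\Delta^{n+1}_{n+1}$ and must be killed exactly by the $\lambda$-derivative term — which is precisely the role of the consistency condition $\Delta^r(\lambda^s)-\Delta^s(\lambda^r)=-(n+1)(r-s)$. One should also note that the identities used (the consistency conditions on $T^n_\circ M$ and the bracket relations on $T^{n+1}M$) hold globally, so the computed formula is valid on all of $T^{n+1}M$, and that each $\tilde\Delta^{r+s-1}$ occurring on the right lies among the defined fields, i.e.\ $1\le r+s-1\le n$ whenever $r+s\le n+1$, which is immediate.
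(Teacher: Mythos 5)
Your proposal is correct and follows essentially the same route as the paper: expand the bracket of the $\tilde{\Delta}$'s into the bracket of the $\Delta_{n+1}$'s plus a $\Delta^{n+1}_{n+1}$-term involving $\Delta^r_n(\lambda^s)-\Delta^s_n(\lambda^r)$ (with the extra $-n\lambda^s/(n+1)$ contribution when $r=1$), then invoke the consistency conditions, the only delicate case being $r+s=n+2$ where the two $\Delta^{n+1}_{n+1}$ contributions cancel. Your write-up is somewhat more explicit than the paper's about the general Leibniz expansion and the case split, but the content is identical.
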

\begin{proof}
We have
\[
[\tilde{\Delta}^1,\tilde{\Delta}^r]=[\Delta^1_{n+1},\Delta^r_{n+1}]+
\frac{(\Delta^1_n(\lambda^r)-\Delta^r_n(\lambda^1)-n\lambda^r)}{(n+1)}\Delta^{n+1}_{n+1}
\]
while 
\[
[\tilde{\Delta}^r,\tilde{\Delta}^s]=[\Delta^r_{n+1},\Delta^s_{n+1}]+
\frac{(\Delta^r_n(\lambda^s)-\Delta^s_n(\lambda^r))}{(n+1)}\Delta^{n+1}_{n+1}
\]
(the $\lambda^r$ are functions on $T^n M$).  The stated results follow
easily, using the consistency conditions on the $\lambda^r$.  The one
case which is not entirely straightforward is $r+s=n+2$, when we have
\begin{align*}
[\tilde{\Delta}^r,\tilde{\Delta}^s]&=[\Delta^r_{n+1},\Delta^s_{n+1}]+
\frac{(\Delta^r_n(\lambda^s)-\Delta^s_n(\lambda^r))}{(n+1)}\Delta^{n+1}_{n+1}\\
&=(r-s)\Delta^{n+1}_{n+1}-\frac{(n+1)(r-s)}{(n+1)}\Delta^{n+1}_{n+1}=0,
\end{align*}
since in this case 
$\Delta^r(\lambda^s)-\Delta^s(\lambda^r)=-(n+1)(r-s)$.
\end{proof}
That is to say, when $\Gamma$ is homogeneous there is a representation of 
the algebra $\l^n$ on $\im\tilde{\Gamma}$.

Notice also that for $r\geq 2$, 
$[\Delta^{n+1}_{n+1},\tilde{\Delta}^r]=0$, since 
$[\Delta^{n+1}_{n+1},\Delta^r_{n+1}]=0$ and 
$\Delta^{n+1}_{n+1}(\lambda^r)=0$.

Let $\phi$ be an orientation-preserving local diffeomorphism of $\R$
leaving 0 fixed.  Let $\phi_n$ be the diffeomorphism of $T^nM$ given
by the action of $j^n_0\phi\in L^n$.  We have
$\tau_{n+1,n}\circ\phi_{n+1}=\phi_n\circ\tau_{n+1,n}$.  Let $\Gamma$
be an $(n+1)$st-order differential equation field,
$\tilde{\Gamma}:T^n_\circ M\to T^{n+1}_\circ M$ the corresponding
section.  Since $\tau_{n+1,n}\circ\tilde{\Gamma}=\id_{T^n_\circ M}$,
$\phi_{n+1}^{-1}\circ\tilde{\Gamma}\circ\phi_n$ is also a section of
$\tau_{n+1,n}$, and so defines a new $(n+1)$st-order differential
equation field, say $\Gamma_\phi$.  Suppose that $\Gamma$ is any
homogeneous $(n+1)$st-order differential equation field.  We shall
show that for any $\phi$, there is a function $\nu_\phi$ on $T^n_\circ
M$ such that
\[
\Gamma^i_\phi(y^j_r)=\Gamma^i(y^j_r)+\nu_\phi(y^j_r)y^i_1.
\]
Note that when this holds $\Gamma_\phi=\Gamma+\nu_\phi\Delta^n_n$, so
$\Gamma_\phi$ is projectively equivalent to $\Gamma$.  From the point
of view of representation by sections one might say that
$\tilde{\Gamma}_\phi$ is obtained by sliding $\tilde{\Gamma}$ along
the integral curves of $\Delta^{n+1}_{n+1}$.  To be more precise,
recalling that $T^{n+1}_\circ M$ is an affine bundle over $T^n_\circ
M$ modelled on the bundle of very vertical tangent vectors on
$T^n_\circ M$, $\tilde{\Gamma}_\phi$ is the translate of
$\tilde{\Gamma}$ by the very vertical vector field
$\nu_\phi\Delta^{n+1}_{n+1}$.
 
\begin{thm}  
Let $\Gamma$ be a homogeneous differential equation field and let
$\phi$ be an orientation-preserving local diffeomorphism of $\R$ 
leaving 0 fixed. Then $\Gamma_\phi$ is projectively equivalent to
$\Gamma$.
\end{thm}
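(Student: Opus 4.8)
The plan is to quotient $T^{n+1}_\circ M$ by the kernel $K$ of the homomorphism $\lambda_{n+1,n}\colon L^{n+1}\to L^n$ and to exploit the fact that, when $\Gamma$ is homogeneous, the image of the section $\tilde\Gamma$ becomes an $L^{n+}$-invariant submanifold of the quotient; that invariance then forces $\tilde\Gamma_\phi$ and $\tilde\Gamma$ to differ by a slide along $\Delta^{n+1}_{n+1}$, which is exactly projective equivalence.

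First, the bookkeeping on $K$. It is a connected one-parameter (hence abelian) normal subgroup of $L^{n+1}$ with Lie algebra $\langle\delta^{n+1}\rangle$, so on $T^{n+1}_\circ M$ it is generated by $\Delta^{n+1}_{n+1}=(n+1)!\,y^i_1\,\partial/\partial y^i_{n+1}$. Since this field is $\tau_{n+1,n}$-vertical and nowhere zero on $T^{n+1}_\circ M$, the $K$-action is free and fibred over $T^n_\circ M$, acting on each fibre $\tau_{n+1,n}^{-1}(y)$ by translation along the nonzero vector $(y^i_1)$; hence $Q:=T^{n+1}_\circ M/K$ is a manifold, $\pi\colon T^{n+1}_\circ M\to Q$ is a submersion with $\ker d\pi=\langle\Delta^{n+1}_{n+1}\rangle$, and $\tau_{n+1,n}$ descends to a bundle $Q\to T^n_\circ M$. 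Because $K\lhd L^{n+1}$, the $L^{n+1}$-action descends to an action of $L^{n+1}/K\cong L^n$ on $Q$, and $\pi$ is equivariant over $\lambda_{n+1,n}$; differentiating the equivariance relation shows that $\Delta^{n+1}_{n+1}$ is $\pi$-related to $0$ while, for $r=1,\dots,n$, the fundamental field $\Delta^r_{n+1}$ of $\delta^r$ is $\pi$-related to the fundamental field $\bar\Delta^r$ on $Q$ of $\delta^r\in\l^n$, the $\bar\Delta^1,\dots,\bar\Delta^n$ being the fundamental fields of a basis of $\l^n$.

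Now set $\bar\Gamma:=\pi\circ\tilde\Gamma$, a section of $Q\to T^n_\circ M$ with graph $\im\bar\Gamma=\pi(\im\tilde\Gamma)$. As $\Delta^{n+1}_{n+1}$ spans $\ker d\pi$ and is transverse to the graph $\im\tilde\Gamma$ (a section is transverse to a vertical field), $\pi$ restricts to a diffeomorphism $\im\tilde\Gamma\to\im\bar\Gamma$. By the observation recorded just before the statement, homogeneity of $\Gamma$ means the fields $\tilde\Delta^r=\Delta^r_{n+1}+\tfrac{\lambda^r}{n+1}\Delta^{n+1}_{n+1}$ ($r=1,\dots,n$) are tangent to $\im\tilde\Gamma$; since $d\pi$ kills the last term, $\tilde\Delta^r$ is $\pi$-related to $\bar\Delta^r$, and transporting tangency along the diffeomorphism $\pi|_{\im\tilde\Gamma}$ shows each $\bar\Delta^r$ is tangent to $\im\bar\Gamma$. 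The $\bar\Delta^r$ are complete, and the one-parameter groups their flows realize together generate the group $L^{n+}$ acting on $Q$; hence $\im\bar\Gamma$ is $L^{n+}$-invariant.

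Finally, $\phi_{n+1}$ normalizes $K$ and so descends to a diffeomorphism of $Q$, namely the action of $\lambda_{n+1,n}(j^{n+1}_0\phi)=j^n_0\phi$, which lies in $L^{n+}$ because $\phi$ is orientation preserving. Thus $\pi\circ\phi_{n+1}^{-1}$ is the action of $(j^n_0\phi)^{-1}\in L^{n+}$ on $Q$ followed by $\pi$, and since $\im\tilde\Gamma_\phi=\phi_{n+1}^{-1}(\im\tilde\Gamma)$ while $\im\bar\Gamma$ is $L^{n+}$-invariant, $\pi(\im\tilde\Gamma_\phi)=\im\bar\Gamma=\pi(\im\tilde\Gamma)$. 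Being sections of $Q\to T^n_\circ M$ with the same graph, $\pi\circ\tilde\Gamma_\phi$ and $\pi\circ\tilde\Gamma$ coincide, so for every $y$ the points $\tilde\Gamma_\phi(y)$ and $\tilde\Gamma(y)$ lie on one $K$-orbit in $\tau_{n+1,n}^{-1}(y)$, that is, on $\{(y;\G^i(y)+t(n+1)!\,y^i_1):t\in\R\}$. Hence $\G^i_\phi-\G^i=\nu_\phi\,y^i_1$ for a function $\nu_\phi$ on $T^n_\circ M$, smooth since the $\G^i$ are and the $y^i_1$ cannot all vanish; so $\Gamma_\phi-\Gamma$ is a multiple of $\Delta^n_n$, $\Gamma_\phi$ is again homogeneous, and it is projectively equivalent to $\Gamma$. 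I expect the main work to be the quotient bookkeeping—that $Q$ is a manifold, that $\pi$ is equivariant over $\lambda_{n+1,n}$, and that the $\tilde\Delta^r$ are $\pi$-related to the $\bar\Delta^r$—rather than any single computation; the geometric heart (homogeneity makes $\im\bar\Gamma$ a union of $L^{n+}$-orbits) is then short. A direct coordinate alternative, expanding $\phi_{n+1}^{-1}\circ\tilde\Gamma\circ\phi_n$ by Fa\`{a} di Bruno and invoking the homogeneity identities for the $\G^i$ to show $\G^i_\phi-\G^i\propto y^i_1$, is available but much more laborious.
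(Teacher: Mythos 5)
Your argument is correct, but it follows a genuinely different route from the paper's. The paper first reduces to generators, using $\Gamma_{\phi_1\circ\phi_2}=(\Gamma_{\phi_1})_{\phi_2}$ together with the decomposition of $L^{n+}$ into dilations and the nilpotent kernel $K^{n+1}$ (onto which the exponential is surjective); it then handles the dilation case by explicitly integrating $\Delta^1_n(\G^i)=(n+1)\G^i+n!\lambda^1y^i_1$ along the flow of $\Delta^1$, and the nilpotent case by a flow computation on the submanifold $S=\{\phi^{n+1}_t(\tilde{\Gamma}(y^i_r))\}$, in each case producing an explicit formula for $\nu_\phi$. You instead quotient $T^{n+1}_\circ M$ by the one-parameter kernel of $L^{n+1}\to L^n$ (the same abelian kernel the paper invokes, following Kruglikov--Lychagin, to make $PT^{n+1}M\to PT^nM$ affine), note that the tangency of the $\tilde{\Delta}^r$ to $\im\tilde{\Gamma}$ --- precisely the paper's preparatory computation --- descends to tangency of the fundamental fields of $\l^n$ to the graph of $\pi\circ\tilde{\Gamma}$, and conclude that this graph is $L^{n+}$-invariant, so that $\tilde{\Gamma}_\phi$ and $\tilde{\Gamma}$ are sections with the same image in the quotient and therefore differ by a vertical shift along $\Delta^{n+1}_{n+1}$. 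This is uniform and coordinate-free, avoids the case split and the delicate flow argument on $S$, and makes the geometric content (homogeneity turns the projected graph into a union of $L^{n+}$-orbits) transparent; what it gives up is the explicit integral expression for $\nu_\phi$ that the paper's computation supplies. Two small points of care: ``transverse'' is not the right word for the relation between $\im\tilde{\Gamma}$ and $\Delta^{n+1}_{n+1}$ when $m>1$ --- what you need, and what holds because the graph of a section contains no nonzero $\tau_{n+1,n}$-vertical vectors, is that the field is nowhere tangent to $\im\tilde{\Gamma}$, so that $\pi$ restricts to a diffeomorphism onto the projected graph; and the passage from tangency of the complete fundamental fields to invariance under all of $L^{n+}$ uses that the graph is a closed embedded submanifold and that a connected Lie group is generated by the one-parameter subgroups of a basis of its algebra --- both true, but worth saying.
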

\begin{proof}
Note first that
$\Gamma_{\phi_1\circ\phi_2}=(\Gamma_{\phi_1})_{\phi_2}$.  So if for
some $\phi_1,\phi_2$ and every homogeneous $\Gamma$, it is the case
that $\Gamma_{\phi_1}$ is projectively equivalent to $\Gamma$ and
$\Gamma_{\phi_2}$ is projectively equivalent to $\Gamma$ then
$\Gamma_{\phi_1\circ\phi_2}$ is projectively equivalent to $\Gamma$.
We pointed out in Section 2 that the finite order jet group $L^m$ is
the semi-direct product of $L^1$ and $K^m$, the subgroup consisting of
the jets of diffeomorphisms with derivative at 0 equal to the
identity, and further that every element of the latter is the
exponential of some element of its Lie algebra.  So the result about
$\Gamma_\phi$, or equivalently $\tilde{\Gamma}_\phi$, will be proved
if it can be proved when $\phi(x)=kx$ ($k$ a positive constant) and
when $\phi_{n+1}$ is the exponential of an element of
$\langle\Delta^r_{n+1}\rangle$ for $r=2,\ldots,n$.

Consider first the case where $\phi(x)=kx$.  Then
$\phi_n(y^i_r)=(k^ry^i_r)$.  We have
$\Delta^1_n(\G^i)=(n+1)\G^i+n!\lambda^1y^i_1$, which integrates to give
\[
e^{-(n+1)t}\G^i(e^{rt}y^j_r)-\G^i(y^j_r)=
n!\int_0^te^{-ns}\lambda^1(e^{rs}y^j_r)y^i_1ds.
\]
Then
\begin{align*}
\tilde{\Gamma}_\phi(y^j_r)&=\phi_{n+1}^{-1}\tilde{\Gamma}(\phi_n(y^j_r))
=\phi_{n+1}^{-1}(k^ry^i_r,\G^i(k^ry^j_r))\\
&=(y^i_r,k^{-(n+1)}\G^i(k^ry^j_r))\\
&=(y^i_r,\G^i(y^j_r)+\nu_\phi(y^j_r)y^i_1)
\end{align*}
where
\[
\nu_\phi(y^j_r)=n!\int_0^{\log k}e^{-ns}\lambda^1(e^{rs}y^j_r)ds.
\]
Next, let $\kappa_t$ be the 1-parameter group generated by
$\sum_{p=2}^{n+1} k_p\Delta^p_{n+1}$ for some constants $k_p$.  Now
\[
\sum_{p=2}^{n+1} k_p\Delta^p_{n+1}=
\sum_{p=2}^n (k_p\tilde{\Delta}^p) 
+\left(k_{n+1}-\frac{1}{n+1}\sum_{p=2}^nk_p\lambda^p\right)\Delta^{n+1}_{n+1}.
\]
Let $\tilde{\kappa}_t$ be the flow generated by $\sum_{p=2}^n 
(k_p\tilde{\Delta}^p)$, and $\bar{\kappa}_t$ the flow generated by 
$\sum_{p=2}^n (k_p\Delta^p_n)$, so that
\[ 
\bar{\kappa}_t\circ\tau_{n+1,n}=\tau_{n+1,n}\circ\tilde{\kappa}_t
=\tau_{n+1,n}\circ\kappa_t.
\]
For any $(y^i_r)\in T^n M$ let $\tau(t,y^i_r)$ be the solution of the
first-order ordinary differential equation
\[
\frac{d\tau}{dt}=
k_{n+1}-\frac{1}{n+1}\sum_{p=2}^nk_p\lambda^p(\bar{\kappa}_t(y^j_r))
\]
such that $\tau(0,y^i_r)=0$. We claim that 
\[
\kappa_t(\tilde{\Gamma}(y^i_r))
=\phi^{n+1}_{\tau(t,y^i_r)}(\tilde{\kappa}_t(\tilde{\Gamma}(y^i_r)),
\]
where $\phi^{n+1}_t$ is the 1-parameter group generated by
$\Delta^{n+1}_{n+1}$.  Consider the $(\dim T^n M+1)$-dimensional
submanifold $S$ of $T^{n+1}_\circ M$ consisting of points
$\phi^{n+1}_t(\tilde{\Gamma}(y^i_r))$, $t\in\R$, $(y^i_r)\in T^n_\circ
M$.  Since $\sum_{p=2}^n (k_p\tilde{\Delta}^p)$ is tangent to
$\im\tilde{\Gamma}$,
$\tilde{\kappa}_t(\tilde{\Gamma}(y^i_r))\in\im\tilde{\Gamma}$ for all
$t$, and so
$\phi^{n+1}_{\tau(t,y^i_r)}(\tilde{\kappa}_t(\tilde{\Gamma}(y^i_r))\in
S$ for all $t$.  We can use coordinates $(t,y^i_r)$ on $S$; with
respect to these coordinates $\phi^{n+1}_t$ is just translation of the
first coordinate, and
\[
\phi^{n+1}_{\tau(t,y^i_r)}(\tilde{\kappa}_t(\tilde{\Gamma}(y^i_r))
=(\tau(t,y^i_r),\bar{\kappa}_t(y^i_r)).
\]
Moreover, since $[\Delta^{n+1}_{n+1},\tilde{\Delta}^r]=0$ the
coordinate representation of $\tilde{\Delta}^r$ coincides with its
coordinate representation on $\im\tilde{\Gamma}$, which is the same as the
coordinate representation of $\Delta^r_n$.  Thus the tangent vector to
the curve
$t\mapsto\phi^{n+1}_{\tau(t,y^i_r)}(\tilde{\kappa}_t(\tilde{\Gamma}(y^i_r))$ at $t$,
in coordinate form, is
\[
\dot{\tau}(t,y^i_r)\vf{t}+\sum_{p=2}^nk_p\Delta^p_n({\bar{\kappa}}_t(y^i_r)).
\]
But $\partial/\partial t$ is the coordinate representation of
$\Delta^{n+1}_{n+1}$ on $S$, and we conclude that
$t\mapsto\phi^{n+1}_{\tau(t,y^i_r)}(\tilde{\kappa}_t(\tilde{\Gamma}(y^i_r))$ is an 
integral curve of the vector field
\[
\left(k_{n+1}-\frac{1}{n+1}\sum_{p=2}^nk_p\lambda^p\right)\Delta^{n+1}_{n+1}
+\sum_{p=2}^n (k_p\tilde{\Delta}^p)
=\sum_{p=2}^{n+1} k_p\Delta^p_{n+1}.
\]
Since 
$\phi^{n+1}_{\tau(0,y^i_r)}(\tilde{\kappa}_0(\tilde{\Gamma}(y^i_r))=\tilde{\Gamma}(y^i_r)$, 
we see that 
$t\mapsto\phi^{n+1}_{\tau(t,y^i_r)}(\tilde{\kappa}_t(\tilde{\Gamma}(y^i_r))$ is the 
integral curve of $\sum_{p=2}^{n+1} k_p\Delta^p_{n+1}$ starting at 
$\tilde{\Gamma}(y^i_r)$, that is, it coincides with $\kappa_t(\tilde{\Gamma}(y^i_r))$ 
as claimed.

It follows that
\[
\kappa_t(\tilde{\Gamma}(\bar{\kappa}_{-t}(y^i_r)))
=\phi^{n+1}_{\tau(t,\bar{\kappa}_{-t}(y^i_r))}
(\tilde{\kappa}_t(\tilde{\Gamma}(\bar{\kappa}_{-t}(y^i_r))).
\]
But $\tilde{\kappa}_t$ maps $\im\tilde{\Gamma}$ to itself, and
$\tau_{n+1,n}\circ\tilde{\kappa}_t=\bar{\kappa}_t\circ\tau_{n+1,n}$,
so
$\tilde{\kappa}_t(\tilde{\Gamma}(\bar{\kappa}_{-t}(y^i_r))=\tilde{\Gamma}(y^i_r)$.
Thus
\[
\kappa_t(\tilde{\Gamma}(\bar{\kappa}_{-t}(y^i_r)))
=\phi^{n+1}_{\tau(t,\bar{\kappa}_{-t}(y^i_r))}(\tilde{\Gamma}(y^i_r))
=(y^i_r,\Gamma^i(y^j_r)+\tau(t,\bar{\kappa}_{-t}(y^j_r))y^i_1).
\]
This establishes the required result with $\kappa_t=\phi_{n+1}^{-1}$
and $\nu_\phi(y^i_r)=\tau(t,\phi_n(y^i_r))$.  
\end{proof}

This result shows that a projective equivalence class is invariant
under the action of the group $L^{n+}$ on $T^n_\circ M$.

\section{Examples}
Take $M$ to be Euclidean space of dimension $m$ and the $y^i$ to be 
Euclidean coordinates. With the scalar product etc.\ denoted in the 
usual way, consider the third-order differential equation field 
defined by
\[
\G^i(y_1,y_2)=
-\left(\frac{2|y_1|^2|y_2|^2+(y_1\cdot y_2)^2}{2|y_1|^4}\right)y_1^i
+3\frac{(y_1\cdot y_2)}{|y_1|^2}y^i_2.
\]
This is easily seen to be homogeneous, with $\lambda_1=\lambda_2=0$.

We now explain the geometrical significance of this third-order
system.  Denote the differential equation field by $\Gamma$.  It is
easy to see that
\begin{align*}
&\Gamma(|y_1|^2)=2(y_1\cdot y_2)\\
&\Gamma(y_1\cdot y_2)=\frac{2(y_1\cdot y_2)}{|y_1|^2}.
\end{align*}
It follows that $\Gamma$ is tangent to the submanifold on which 
$|y_1|=1$, ${y_1\cdot y_2=0}$. This consists of the 2-jets of curves 
parametrized by Euclidean arc-length. The restriction of $\G^i$ to this 
submanifold is just $-|y_2|^2y^i_1$. The corresponding vector differential 
equation, considering the $y^i$ as the components of a vector $\r$ with 
respect to an orthonormal frame and using overdots to indicate 
differentiation with respect to arc-length $s$, may be written 
\[
\dddot{\r}=-|\ddot{\r}|^2\dot{\r}.
\]  
Along any solution curve of this differential equation the 
2-plane spanned by $\dot{\r}$ and $\ddot{\r}$ is constant, and so the 
curve is a plane curve. Moreover, $|\ddot{\r}|$ is the curvature of 
the curve, and 
\[
\frac{d}{ds}|\ddot{\r}|^2=2\ddot{\r}\cdot\dddot{\r}=
-2|\ddot{\r}|^2\ddot{\r}\cdot\dot{\r}=0.
\]
So the curve is a plane curve of constant curvature, that is, a circle
(or if the curvature is zero, a straight line).  

Now the submanifold $|y_1|=1$, ${y_1\cdot y_2=0}$ in this case is just
the submanifold $\S'$ of Lemma~\ref{subm}.  It follows that every
point of $T^2_\circ M$ can be written in the form
$\phi^1_{t_1}(\phi^2_{t_2}(z))$ for some $z\in\S'$, where $\phi^1_t$
and $\phi^2_t$ are the 1-parameter groups generated by $\Delta^1$ and
$\Delta^2$.  So in particular every point of $T^2_\circ M$ is the
image by some jet map of a point of $\S'$.  It follows from
Corollary~\ref{repara} that the homogeneous system we started with
also has the property that its geodesics are circles.

There will of course be other homogeneous differential equation fields
with the same property, namely those projectively equivalent to
$\Gamma$.  For example we could take the simpler system with
\[
\G^i(y_1,y_2)=3\frac{(y_1\cdot y_2)}{|y_1|^2}y^i_2. 
\] 
The corresponding field is homogeneous, now with $\lambda^2=(y_1\cdot
y_2)/|y_1|^2$ (but $\lambda^1=0$ still).  However, this field is not
tangent to $\S'$, so while it is true (by Theorem~\ref{projeq}) that
its geodesics are circles, it is not so straightforward to
see this.

We next exhibit a fourth-order system in Euclidean space derived from
a parametric second-order Lagrangian.  Consider the Lagrangian
function $L$ on $T^2_\circ M$ given (in terms of Euclidean
coordinates $(y^i)$ as before) by
\[
L(y,y_1,y_2)=\frac{|y_1|^2|y_2|^2-(y_1\cdot y_2)^2}{|y_1|^5}.
\]
This satisfies $\Delta^1(L)=L$ and $\Delta^2(L)=0$, so is parametric.
In fact $L$ is closely related to the (first) curvature of a curve,
which may be considered as a function $\kappa$ on $T^2_\circ M$:\ we
have $L=\kappa^2|y_1|$ (see for example \cite{Stoker}).  By
deriving the Euler-Lagrange equations for $L$ one obtains the
projective equivalence class of fourth-order equations, necessarily
homogeneous, containing the one given by
\[
\G^i=
-3\frac{(y_2\cdot y_3)}{|y_1|^2}y^i_1
+\left(\frac{5|y_1|^2|y_2|^2-35(y_1\cdot y_2)^2+8|y_1|^2(y_1\cdot 
y_3)}{2|y_1|^4}\right)y^i_2
+6\frac{(y_1\cdot y_2)}{|y_1|^2}y^i_3.
\]
For this particular representative $\Gamma$ we have $\lambda^1=0$, but
$\lambda^2$ is nonzero, and so of course is $\lambda^3$. We find that 
$\Gamma(|y_1|$, $\Gamma(y_1\cdot y_2)$ and $\Gamma(y_1\cdot 
y_3+|y_2|^2)$ are contained in the ring generated by these functions, 
which means that $\Gamma$ is tangent to the submanifold $\S''$ of 
$T^2_\circ M$ where $|y_1|=1$, $y_1\cdot y_2=y_1\cdot 
y_3+|y_2|^2=0$, whose points consist of jets of curves parametrized 
by arc-length. On $S''$ we obtain the differential equation 
\[
\ddddot{\r}=-3(\ddot{\r}\cdot\dddot{\r})\dot{\r}
-\threehalf|\ddot{\r}|^2\ddot{\r}
\]
(in terms of arc-length parameter). This may be more succinctly 
written as
\[
\frac{d}{ds}\left(\dddot{\r}+\threehalf|\ddot{\r}|^2\dot{\r}\right)=0.
\] 
This is the Euclidean version of an equation discussed recently by 
Matsyuk in the context of
`Zitterbewegung'~\cite{Mat} (eq (38) with $A=0$ and 
$R=0$).

\subsubsection*{Acknowledgements} The first author is a Guest
Professor at Ghent University:\ he is grateful to the Department of
Mathematics for its hospitality.  The second author acknowledges the
support of grant no.\  201/09/0981 for Global Analysis and its
Applications from the Czech Science Foundation.

\subsubsection*{Address for correspondence}
M.\ Crampin, 65 Mount Pleasant, Aspley Guise, Beds MK17~8JX, UK\\
Crampin@btinternet.com


\begin{thebibliography}{99}
    
\bibitem{buc}
\art{I.\ Bucataru, O.\,A.\ Constantinescu and M.\,F.\ Dahl}{A geometric
setting for systems of ordinary differential equations}{Int.\
J.\ Geom.\ Methods Mod.\ Phys.}{8}{2011}{to appear}

\bibitem{hom}
\art{M.\ Crampin}{Homogeneous systems of higher-order ordinary 
differential equations}{Communications in Mathematics}{18}{2010}{37--50}

\bibitem{CS2}
\art{M.\ Crampin and D.\,J.\ Saunders}{The Hilbert-Carath\'{e}odory
and Poincar\'{e}-Cartan forms for higher-order multiple-integral
variational problems}{Houston J.\,Math.}{30 (3)}{2004}{657--689}

\bibitem{FdB}
\art{F.\ Fa\`{a} di Bruno}{Sullo sviluppo delle Funzioni}{Annali di
Scienze Matematiche e Fisiche}{6}{1855}{479--480}

\bibitem{KMS}
\book{I.\ Kol\'{a}\v{r}, P.\,W.\ Michor and J.\ Slovak}{Natural
Operations in Differential Geometry}{Springer}{1993}

\bibitem{KL}
\artinbook{B.\,S.\ Kruglikov and V.\,V.\ Lychagin}{Geometry of
differential equations}{Handbook of Global Analysis}{D.\,Krupka and
D.\,J.\,Saunders}{Elsevier}{2008}{725--771}

\bibitem{Mat}
\arxiv{R.\,Ya.\ Matsyuk}{Higher order variational origin of the
Dixon's system and its relation to the quasi-classical
`Zitterbewegung' in General Relativity}{1104.5384}

\bibitem{MM}
\art{J.\ Mu\~{n}oz Masqu\'{e} and I.\,M.\ Pozo 
Coronado}{Parameter-invariant second-order variational problems in 
one varaiable}{J.\ Phys.\ A:\ Math.\ Gen.}{31}{1998}{6225--6242}

\bibitem{Stoker}
\book{J.\,J.\ Stoker}{Differential Geometry}{Wiley-Interscience}{1969}

\end{thebibliography}
\end{document}